\documentclass[11pt,leqno]{article}
\usepackage[margin=1in]{geometry} 
\geometry{letterpaper}

\usepackage{amssymb,amsfonts,amsmath,bbm,mathrsfs,stmaryrd,mathtools}
\usepackage{xcolor}
\usepackage{url}

%%% for long equal signs: \xlongequal[below]{above}
%%% ref: https://tex.stackexchange.com/a/5157/193209
\usepackage{extarrows}

\usepackage[shortlabels]{enumitem}
\usepackage{tensor}

\usepackage{xr}
\externaldocument[ORIG-]{}

\usepackage[T1]{fontenc}

\usepackage[colorlinks,
linkcolor=black!75!red,
citecolor=blue,
pdftitle={},
pdfproducer={pdfLaTeX},
pdfpagemode=None,
bookmarksopen=true,
bookmarksnumbered=true,
backref=page]{hyperref}

\usepackage{tikz}
%\usetikzlibrary{arrows,decorations.pathreplacing,decorations.markings,shapes.geometric,through,fit,shapes.symbols,positioning}
\usetikzlibrary{arrows,calc,decorations.pathreplacing,decorations.markings,intersections,shapes.geometric,through,fit,shapes.symbols,positioning,decorations.pathmorphing}

\usepackage{braket}

%%% Theorems and references %%%
\usepackage[amsmath,thmmarks,hyperref]{ntheorem}
\usepackage{cleveref}

% % cleveref aliasing: all
% % ref: https://tex.stackexchange.com/a/732053/193209
% %  
% % general function
\newcommand\nthalias[1]{\AddToHook{env/#1/begin}{\crefalias{lemma}{#1}}}

% % specific aliases
\nthalias{definition}
\nthalias{example}
\nthalias{examples}
\nthalias{remark}
\nthalias{remarks}
\nthalias{convention}
\nthalias{notation}
\nthalias{construction}
\nthalias{theoremN}
\nthalias{propositionN}
\nthalias{corollaryN}
\nthalias{lemma}
\nthalias{proposition}
\nthalias{corollary}
\nthalias{theorem}
\nthalias{conjecture}
\nthalias{question}
\nthalias{assumption}
% %

% % cleveref aliasing: only amsthm
% % ref: https://tex.stackexchange.com/a/730164/193209
% % \makeatletter
% % \def\cref@thmoptarg[#1]#2#3#4{%
% %     \ifhmode\unskip\unskip\par\fi%
% %     \normalfont%
% %     \trivlist%
% %     \let\thmheadnl\relax%
% %     \let\thm@swap\@gobble%
% %     \thm@notefont{\fontseries\mddefault\upshape}%
% %     \thm@headpunct{.}% add period after heading
% %     \thm@headsep 5\p@ plus\p@ minus\p@\relax%
% %     \thm@space@setup%
% %     #2% style overrides
% %     \@topsep \thm@preskip               % used by thm head
% %     \@topsepadd \thm@postskip           % used by \@endparenv
% %     \def\@tempa{#3}\ifx\@empty\@tempa%
% %       \def\@tempa{\@oparg{\@begintheorem{#4}{}}[]}%
% %     \else%
% %       \refstepcounter[#1]{#3}%  <<< cleveref modification
% %       \@namedef{cref@#3@alias}{#1}% added
% %       \def\@tempa{\@oparg{\@begintheorem{#4}{\csname the#3\endcsname}}[]}%
% %     \fi%
% %     \@tempa}%
% % \makeatother
% %

\creflabelformat{enumi}{#2#1#3}

\crefname{section}{Section}{Sections}
\crefformat{section}{#2Section~#1#3} 
\Crefformat{section}{#2Section~#1#3} 

\crefname{subsection}{\S}{\S\S}
\AtBeginDocument{%
  \crefformat{subsection}{#2\S#1#3}%
  \Crefformat{subsection}{#2\S#1#3}% 
}

\crefname{subsubsection}{\S}{\S\S}
\AtBeginDocument{%
  \crefformat{subsubsection}{#2\S#1#3}%
  \Crefformat{subsubsection}{#2\S#1#3}% 
}

% \crefname{subsection}{\S}{\S\S}
% \crefformat{subsection}{#2\S#1#3} 
% \Crefformat{subsection}{#2\S#1#3} 
%

\theoremstyle{plain}

\newtheorem{lemma}{Lemma}[section]
\newtheorem{proposition}[lemma]{Proposition}
\newtheorem{corollary}[lemma]{Corollary}
\newtheorem{theorem}[lemma]{Theorem}

% unnumbered
% \theoremstyle{nonumberplain}

% labeled w/ letters
\theoremstyle{plain}
\theoremnumbering{Alph}
\newtheorem{theoremN}{Theorem}

\theoremstyle{plain}
\theorembodyfont{\upshape}
\theoremsymbol{\ensuremath{\blacklozenge}}

\newtheorem{definition}[lemma]{Definition}

\newtheorem{remark}[lemma]{Remark}
\newtheorem{remarks}[lemma]{Remarks}

\crefname{definition}{definition}{definitions}
\crefformat{definition}{#2definition~#1#3} 
\Crefformat{definition}{#2Definition~#1#3} 

\crefname{ex}{example}{examples}
\crefformat{example}{#2example~#1#3} 
\Crefformat{example}{#2Example~#1#3} 

\crefname{exs}{example}{examples}
\crefformat{examples}{#2example~#1#3} 
\Crefformat{examples}{#2Example~#1#3} 

\crefname{remark}{remark}{remarks}
\crefformat{remark}{#2remark~#1#3} 
\Crefformat{remark}{#2Remark~#1#3} 

\crefname{remarks}{remark}{remarks}
\crefformat{remarks}{#2remark~#1#3} 
\Crefformat{remarks}{#2Remark~#1#3} 

\crefname{convention}{convention}{conventions}
\crefformat{convention}{#2convention~#1#3} 
\Crefformat{convention}{#2Convention~#1#3} 

\crefname{notation}{notation}{notations}
\crefformat{notation}{#2notation~#1#3} 
\Crefformat{notation}{#2Notation~#1#3} 

\crefname{table}{table}{tables}
\crefformat{table}{#2table~#1#3} 
\Crefformat{table}{#2Table~#1#3}

\crefname{lemma}{lemma}{lemmas}
\crefformat{lemma}{#2lemma~#1#3} 
\Crefformat{lemma}{#2Lemma~#1#3} 

\crefname{proposition}{proposition}{propositions}
\crefformat{proposition}{#2proposition~#1#3} 
\Crefformat{proposition}{#2Proposition~#1#3} 

\crefname{propositionN}{proposition}{propositions}
\crefformat{propositionN}{#2proposition~#1#3} 
\Crefformat{propositionN}{#2Proposition~#1#3} 

\crefname{corollary}{corollary}{corollaries}
\crefformat{corollary}{#2corollary~#1#3} 
\Crefformat{corollary}{#2Corollary~#1#3} 

\crefname{corollaryN}{corollary}{corollaries}
\crefformat{corollaryN}{#2corollary~#1#3} 
\Crefformat{corollaryN}{#2Corollary~#1#3} 

\crefname{theorem}{theorem}{theorems}
\crefformat{theorem}{#2theorem~#1#3} 
\Crefformat{theorem}{#2Theorem~#1#3} 

\crefname{theoremN}{theorem}{theorems}
\crefformat{theoremN}{#2theorem~#1#3} 
\Crefformat{theoremN}{#2Theorem~#1#3} 

\crefname{enumi}{}{}
\crefformat{enumi}{#2#1#3}
\Crefformat{enumi}{#2#1#3}

\crefname{assumption}{assumption}{Assumptions}
\crefformat{assumption}{#2assumption~#1#3} 
\Crefformat{assumption}{#2Assumption~#1#3} 

\crefname{construction}{construction}{Constructions}
\crefformat{construction}{#2construction~#1#3} 
\Crefformat{construction}{#2Construction~#1#3} 

\crefname{question}{question}{Questions}
\crefformat{question}{#2question~#1#3} 
\Crefformat{question}{#2Question~#1#3} 

\crefname{equation}{}{}
\crefformat{equation}{(#2#1#3)} 
\Crefformat{equation}{(#2#1#3)}

%----------------Numbering equations---------------------

\numberwithin{equation}{section}

\theoremstyle{nonumberplain}
\theoremsymbol{\ensuremath{\blacksquare}}

\newtheorem{proof}{Proof}
\newcommand\pf[1]{\newtheorem{#1}{Proof of \Cref{#1}}}

\newcommand\bA{{\mathbb A}}

\newcommand\bC{{\mathbb C}}
\newcommand\bD{{\mathbb D}}

\newcommand\bG{{\mathbb G}}
\newcommand\bH{{\mathbb H}}

\newcommand\bK{{\mathbb K}}
\newcommand\bL{{\mathbb L}}

\newcommand\bP{{\mathbb P}}
\newcommand\bQ{{\mathbb Q}}
\newcommand\bR{{\mathbb R}}
\newcommand\bS{{\mathbb S}}
\newcommand\bT{{\mathbb T}}

\newcommand\bZ{{\mathbb Z}}

\newcommand\cC{{\mathcal C}}

\newcommand\fb{{\mathfrak b}}

\newcommand\fg{{\mathfrak g}}

\newcommand\fl{{\mathfrak l}}

\newcommand\fs{{\mathfrak s}}
\newcommand\ft{{\mathfrak t}}

\newcommand\fz{{\mathfrak z}}

%%%%%%%%%%%%%%%%%%% simple math operators %%%%%%%%%%%%%%%%%%%
\DeclareMathOperator{\Ad}{Ad}

\DeclareMathOperator{\aspn}{\mathrm{affspan}}
\DeclareMathOperator{\lk}{\mathrm{lk}}

\DeclareMathOperator{\Rep}{\cat{Rep}}
\DeclareMathOperator{\Irr}{\cat{Irr}}

\DeclareMathOperator{\rk}{\mathrm{rk}}
%%%%%%%%%%%%%%%%%%% group-type objects %%%%%%%%%%%%%%%%%%%
\DeclareMathOperator{\GL}{GL}

\DeclareMathOperator{\SU}{SU}

\DeclareMathOperator{\U}{U}

\DeclareMathOperator{\spc}{sp}

%\DeclareMathOperator{\Aut}{Aut}
%\DeclareMathOperator{\sgn}{sgn}
%\DeclareMathOperator{\End}{End}

%\DeclareMathOperator{\id}{id}

%\DeclareMathOperator{\spn}{span}

%%%%%%%%%%%%%%%%%%%%%%%%%%%%%%%%%%%%%%%%%%%%%%%%%%%%%%%%%%
%%% align* numbering %%%

%%%%%%%%%%%%%%%%%%%%%%%%%%%%%%%%%%%%%%%%%%%%%%%%%%%%%%%%%%
%%% misc %%%

\newcommand{\cat}[1]{\textsc{#1}}

%%%%%% \xrightarrowdbl %%%%%%
%%%%%% ref: https://tex.stackexchange.com/questions/260554/two-headed-version-of-xrightarrow

\newcommand{\xrightarrowdbl}[2][]{%
  \xrightarrow[#1]{#2}\mathrel{\mkern-14mu}\rightarrow
}

%%%%%% hooked \xrightarrow %%%%%%
% \lhook\joinrel\xrightarrow{\quad}

%%%%%

\title{Eigenvalue selectors for representations of compact connected groups}
\author{Alexandru Chirvasitu}

%\subjclass[2010]{102000}

\begin{document}

\date{}

\newcommand{\Addresses}{{% additional braces for segregating \footnotesize
  \bigskip
  \footnotesize

  \textsc{Department of Mathematics, University at Buffalo}
  \par\nopagebreak
  \textsc{Buffalo, NY 14260-2900, USA}  
  \par\nopagebreak
  \textit{E-mail address}: \texttt{achirvas@buffalo.edu}

  % % \medskip
  % % 
  % % \textsc{Department of Mathematics, INSTITUTION}
  % % \par\nopagebreak
  % % \textsc{ADDRESS}
  % % \par\nopagebreak
  % % \textit{E-mail address}: \texttt{??}
  % % 

}}

\maketitle

\begin{abstract}
  A representation $\rho$ of a compact group $\mathbb{G}$ selects eigenvalues if there is a continuous circle-valued map on $\mathbb{G}$ assigning an eigenvalue of $\rho(g)$ to every $g\in \mathbb{G}$. For every compact connected $\mathbb{G}$, we characterize the irreducible $\mathbb{G}$-representations which select eigenvalues as precisely those annihilating the intersection $Z_0(\mathbb{G})\cap \mathbb{G}'$ of the connected center of $\mathbb{G}$ with its derived subgroup. The result applies more generally to finite-spectrum representations isotypic on $Z_0(\mathbb{G})$, and recovers as applications (noted in prior work) the existence of a continuous eigenvalue selector for the natural representation of $\mathrm{SU}(n)$ and the non-existence of such a selector for $\mathrm{U}(n)$. 
\end{abstract}

\noindent {\em Key words: eigenvalue selector; weight; simply-connected; maximal pro-torus; compact group; commutator subgroup; semisimple; Weyl group}

\vspace{.5cm}

\noindent{MSC 2020: 22E46; 22C05; 55R10; 47A10; 17B10; 17B22; 54D05; 20F55
  
  % % 22E46; %Semisimple Lie groups and their representations
  % % 22C05; %Compact groups
  % % 55R10; %Fiber bundles in algebraic topology
  % % 47A10; %Spectrum, resolvent
  % % 17B10; %Representations of Lie algebras and Lie superalgebras, algebraic theory (weights)
  % % 17B22; %Root systems
  % % 54D05; %Connected and locally connected spaces (general aspects)
  % % 20F55; %Reflection and Coxeter groups (group-theoretic aspects)
  
}

%\tableofcontents

%%%%%%%%%%%%%%%%%%%%%%%%%%%%%%%%
%%%%%%%%%%%%%%%%%%%%%%%%%%%%%%%%
\section*{Introduction}

The original motivation for the present paper lies in remarks adjacent to work carried out in \cite{2501.06840v1}, on and around classifying \emph{spectrum-shrinking} maps
\begin{equation*}
  \text{$n\times n$ unitary group}
  =:
  \U(n)
  \xrightarrow[\quad\text{continuous}\quad]{\quad\varphi\quad}
  M_n
  :=
  \text{$n\times n$ complex matrices}.
\end{equation*}
\cite[Corollary 1.2(d)]{2501.06840v1} places severe constraints on such a map, which as an immediate consequence (in any case easily verified directly) imply that $\U(n)$ does not admit a \emph{continuous eigenvalue selector}
\begin{equation*}
  \U(n)
  \xrightarrow[\quad\text{continuous}\quad]{\quad\varphi\quad}
  \bS^1
  ,\quad
  \varphi(U)\in\sigma(U):=\text{spectrum of $U$}
  ,\quad
  \forall U\in \U(n)
\end{equation*}
(see also \cite[Example 2]{LiZhang} for the analogous claim for $M_n$ rather than $\U(n)$). By contrast, the special unitary groups $\SU(n)$ \emph{do} admit such continuous eigenvalue selectors. As sketched in \cite[Remark 1.4(3)]{2501.06840v1}, this is ultimately a consequence of Morton's description \cite[Theorem]{MR210096} of the \emph{symmetric power}
\begin{equation}\label{eq:quot.is.bdl}
  \left(\bS^1\right)^{[n]}
  :=
  \left(\bS^1\right)^n/\left(\text{symmetric group }S_n\right)
  \overset{\text{\cite[Proposition IV.2.6]{btd_lie_1995}}}{\cong}
  \U(n)/\text{adjoint action}
\end{equation}
as a fibration over $\bS^1$ with simplex fibers. Here, we place these remarks in a broader context of characterizing representations of compact connected (perhaps non-Lie) groups admitting such selectors. 

In the sequel $\Rep(\bG)=\Rep_{\bC}(\bG)$ denotes the category of complex representations of a compact group $\bG$ that are unions of finite-dimensional ones. Equivalently, by semisimplicity \cite[Proposition II.1.9]{btd_lie_1995}, these are exactly the (possibly infinite) direct sums of irreducible (finite-dimensional) $\bG$-representations. By standard Peter-Weyl theory (e.g. \cite[Theorems 3.51 and 4.22]{hm5}) $\Rep(\bG)$ is also equivalent to
\begin{itemize}
\item the category of \emph{unitary} $\bG$-representations on (possibly infinite-dimensional) complex Hilbert spaces;

\item or the category of representations on arbitrary complex Banach spaces $E$,
\end{itemize}
in both cases the continuity constraint on a representation $\bG\xrightarrow{\rho}\GL(E)$ being
\begin{equation*}
  \forall v\in E
  \quad:\quad
  \bG\ni g\xmapsto{\quad}\rho(g)v\in E
  \text{ is continuous}.
\end{equation*}

This in place, the main operative notions alluded to above are as follows. 

\begin{definition}\label{def:eig.sel}
  \begin{enumerate}[(1),wide]
  \item Let $\bG$ be a compact group. A \emph{continuous eigenvalue selector} for a representation $\rho\in \mathrm{Rep}(\bG)$ or a \emph{continuous $\rho$-selector} is a continuous function
    \begin{equation*}
      \bG\xrightarrow{\quad\varphi\quad}\bS^1
      \quad\text{with}\quad
      \varphi(g)\in \spc\rho(g)
      ,\quad
      \forall g\in \bG.
    \end{equation*}
    If $\rho$ admits one, we also say that it \emph{selects eigenvalues}.

  \item $\bG$ \emph{selects eigenvalues} (or \emph{has continuous eigenvalue selectors}) if every $\rho\in \Rep(\bG)$ does.
  \end{enumerate}  
\end{definition}

In the language just introduced, the two motivating remarks mentioned above are:
\begin{enumerate}[(a),wide]
\item the natural $n$-dimensional representation of $\SU(n)$ selects eigenvalues;

\item while the same representation of $\U(n)$ does not. 
\end{enumerate}

\Cref{thn:sel.iff.z0g} below specializes to a characterization of the irreducible $\bG$-representations admitting continuous eigenvalue selectors for any compact connected $\bG$, recovering the two observations as straightforward consequences. 

We follow fairly standard conventions (e.g. \cite[p.xi]{hm5}) in denoting by $\bG_0$ the identity connected component of a topological group $\bG$. Similarly,
\begin{itemize}[wide]
\item $Z_0(\bG):=Z(\bG)_0$ \cite[p.xiv, Theorem 23]{hm5} is the identity component of the center $Z(\bG)$;
  
\item and $\bG'$ is the \emph{commutator} (or \emph{derived}) \emph{subgroup} \cite[p.xiii, Definition 15]{hm5}, i.e. the subgroup generated by commutators
  \begin{equation*}
    [x,y]:=xyx^{-1}y^{-1}
    ,\quad
    x\,y\in \bG.
  \end{equation*}
  Recall \cite[Theorem 9.2]{hm5} that for compact connected groups $\bG$ the derived subgroup is in fact closed, and consists of commutators. 
\end{itemize}

Denote by $\Irr(\bG)$ the set of isomorphism classes of simple $\bG$-representations. The \emph{spectrum} of $\rho\in \Rep(\bG)$ is
\begin{equation*}
  \spc \rho
  :=
  \left\{\alpha\in \Irr(\bG)\ :\ \alpha\le \rho\right\}
  \quad
  \text{(i.e. $\alpha$ is a summand of $\rho$)}.
\end{equation*}

Recall also \cite[Definition 4.21(i)]{hm5} that $\rho\in\Rep(\bG)$ is \emph{isotypic} if it decomposes as a sum of copies of a single irreducible $\bG$-representation.

\begin{theoremN}\label{thn:sel.iff.z0g}
  Let $\bG$ be a compact connected group and $\rho\in \Rep(\bG)$.
  \begin{enumerate}[(1),wide]
  \item\label{item:thn:sel.iff.z0g:ann2sel} If $\rho$ contains an irreducible summand annihilating $Z_0(\bG)\cap \bG'$ then it selects eigenvalues.

  \item\label{item:thn:sel.iff.z0g:sel2ann} Conversely, if $\rho$ has finite spectrum and isotypic restriction $\rho|_{Z_0(\bG)}$, then it can only select eigenvalues if it annihilates $Z_0(\bG)\cap \bG'$. 
  \end{enumerate}  
\end{theoremN}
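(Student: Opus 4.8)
Write $Z_0=Z_0(\bG)$, let $\bG'$ be the (closed) derived subgroup, and set $N:=Z_0\cap\bG'$, a central — hence normal — subgroup of $\bG$. I would use throughout the structure of compact connected groups, in particular $\bG=Z_0\bG'$; from this the images of $Z_0$ and of $\bG'$ in $\bG/N$ commute, meet only in the identity, and generate, so $\bG/N\cong(Z_0/N)\times\bH$ with $\bH:=\bG'/N$ again compact connected and semisimple, in particular perfect. The two parts are then essentially independent: (1) is a construction built on the known $\SU(n)$ selector, while (2) is a short homotopy-theoretic obstruction.

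\textbf{Part~(1).} Since $\spc\alpha(g)\subseteq\spc\rho(g)$ whenever $\alpha\le\rho$, a continuous selector for a subrepresentation is one for $\rho$, so it suffices to produce a selector for the given irreducible summand $\alpha$ with $\alpha(N)=1$. Such an $\alpha$ factors through $\bG/N=(Z_0/N)\times\bH$, hence $\alpha\cong\chi\boxtimes\beta$ for a character $\chi$ of $Z_0/N$ and an irreducible $\beta$ of $\bH$. As $\bH$ is perfect, $\det\circ\beta$ is trivial, so $\beta$ maps into $\SU(V_\beta)$; invoking the fact that the tautological representation of $\SU(n)$ carries a continuous eigenvalue selector $\varphi_0$ (observation~(a) above, a consequence of Morton's fibration \cite{MR210096}), the composite $\psi:=\varphi_0\circ\beta$ is a continuous selector for $\beta$ regarded as a representation of $\bH$. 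Then $g\mapsto\chi(\overline z)\,\psi(\overline h)$, where $(\overline z,\overline h)$ is the image of $g$ in $(Z_0/N)\times\bH$, is continuous — it is the composite $\bG\to\bG/N\xrightarrow{\ \chi\times\psi\ }\bS^1\times\bS^1\to\bS^1$ — and takes values in $\chi(\overline z)\,\spc\beta(\overline h)=\spc\bigl(\chi(\overline z)\beta(\overline h)\bigr)=\spc\alpha(g)$, so it is the required $\rho$-selector.

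\textbf{Part~(2).} Isotypicity of $\rho|_{Z_0}$ over the abelian group $Z_0$ means $Z_0$ acts on $\rho$ through a single character $\chi\in\widehat{Z_0}$, so $\spc\rho(z)=\{\chi(z)\}$ is a singleton for every $z\in Z_0$; hence any continuous $\rho$-selector $\varphi$ satisfies $\varphi|_{Z_0}=\chi$. I would now use the standard fact that a continuous map from a compact connected group to $\bS^1$ is homotopic to a unique continuous character \cite{hm5}, writing $\varphi\simeq\psi_0$ with $\psi_0\in\widehat{\bG}$. Restricting the homotopy to $Z_0$ shows that $\chi=\varphi|_{Z_0}$ and $\psi_0|_{Z_0}$ are homotopic characters of the pro-torus $Z_0$, hence equal (uniqueness in the cited fact, applied to $Z_0$). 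But $\psi_0$, being a homomorphism into an abelian group, kills the derived subgroup $\bG'$, in particular $N\subseteq\bG'$, so $\chi|_N=\psi_0|_N=1$; equivalently $\rho(N)=\{\id\}$, which is the assertion.

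\textbf{Where the difficulty lies.} Part~(1) contains no essential difficulty beyond importing the $\SU(n)$ case (Morton's theorem); the remainder is structure-theoretic bookkeeping, the only delicate points being in the pro-Lie setting — that $\bG=Z_0\bG'$, that $N$ and $\bH=\bG'/N$ behave as asserted, and that irreducibles of $(Z_0/N)\times\bH$ split as external tensor products. For Part~(2) everything rests on the homotopy fact ``maps to $\bS^1$ are homotopic to characters'' for compact connected groups — essentially the identification $H^1(\bG;\bZ)\cong\widehat{\bG}$, which ultimately reduces to the torus case; once that is granted, the obstruction is immediate. So both substantive ingredients are external, and the work is in deploying them correctly rather than in any intricate new estimate.
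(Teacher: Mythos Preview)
Your Part~(1) is essentially the paper's own argument: pass to the irreducible summand annihilating $N=Z_0\cap\bG'$, factor through $\bG/N\cong(\text{pro-torus})\times(\text{semisimple})$, and on the semisimple factor observe that irreducibles land in $\SU$ and hence inherit a selector from the tautological one on $\SU(n)$.

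Part~(2) is correct but takes a genuinely different route. The paper first uses the finite-spectrum hypothesis to reduce to the Lie case, then establishes the modular identity $\varphi(sg)=\theta(s)\varphi(g)$ for $s\in Z_0$ by a clopen-partition argument (this is where finiteness of $\sigma_p(\rho(g))$ enters), restricts to $\bG'$ to obtain a continuous map $\bG'\to\bS^1$ equivariant for the free translation action of $\Gamma=\langle x\rangle\le N$, and invokes Dold's Borsuk--Ulam-type result \cite{MR1247532} (such an equivariant map cannot be nullhomotopic) against the fact that $\varphi|_{\bG'}$ \emph{is} nullhomotopic because $\pi_1(\bG')$ is finite. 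You instead appeal directly to $[\bG,\bS^1]\cong\widehat{\bG}$: once $\varphi\simeq\psi_0$ with $\psi_0$ a character, restriction to $Z_0$ forces $\chi=\psi_0|_{Z_0}$ by uniqueness, and $\psi_0$ kills $\bG'\supseteq N$. This is cleaner and---as you may not have noticed---never actually uses the finite-spectrum hypothesis, so it yields a mild strengthening of the stated Part~(2). The trade-off is that the identification $[\bG,\bS^1]\cong\widehat{\bG}$ for arbitrary compact connected (possibly non-Lie, hence possibly non-CW) $\bG$ is not entirely trivial; it follows from continuity of \v{C}ech cohomology under the inverse system of Lie quotients together with the elementary Lie-group computation $H^1(\bG;\bZ)=\Hom(\pi_1(\bG),\bZ)=\widehat{\bG}$, but you should either pin down a precise citation or include this short derivation rather than leaving it as an unspecified reference to \cite{hm5}.
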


As an offshoot of the discussion, relating back to the above-mentioned description in \cite[Theorem]{MR210096} of the quotient \Cref{eq:quot.is.bdl} as a disk bundle over a circle, we revisit the matter in \Cref{th:conj.clss.fib}. Paraphrased and compressed here for brevity, that result reads as follows. 

\begin{theoremN}
  \begin{enumerate}[(1),wide]
  \item The space of adjoint orbits of a compact connected group $\bG$ with center $Z(\bG)$ is a fiber bundle over a torus of dimension $\dim Z(\bG)$, with contractible fibers.

  \item The fiber decomposes as a product of $\mathrm{rank}(\bG')$ joins $\bD*(\bS/\Gamma)$ for disks $\bD$ and actions of finite cyclic groups $\Gamma$ on spheres $\bS$.

  \item Said fibers are in fact homeomorphic to disks if the (semisimple) derived subgroup $\bG'\le \bG$ fits into an exact sequence
    \begin{equation*}
      \{1\}
      \to
      \bL      
      \lhook\joinrel\xrightarrow{\quad}
      \bG'
      \xrightarrowdbl{\quad}
      \bG'_{B}
      \to
      \{1\}      
    \end{equation*}
    with $\bL$ simply-connected and $\bG'_B$ having only type-$B$ quotients. 
  \end{enumerate}
\end{theoremN}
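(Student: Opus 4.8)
The plan is to compute the orbit space through the standard model $\bG/\Ad(\bG)\cong T/W$, with $T\le\bG$ a maximal pro-torus and $W=N_{\bG}(T)/Z_{\bG}(T)$ the finite Weyl group; in the non-Lie case this comes from writing $\bG=\varprojlim\bG_n$ as a projective limit of compact connected Lie groups and taking the limit of the $T_n/W_n$, so one can largely argue as if $\bG$ were Lie and pass to the limit at the end. The lever used throughout is that $W$ acts \emph{linearly} on $\mathfrak t:=\mathrm{Lie}(T)$ — it is the differential at $e$ of conjugation — so that, with $L\le\mathfrak t$ the cocharacter lattice ($T=\mathfrak t/L$, and $W$ preserves $L$), one has $T/W=\mathfrak t/(W\ltimes L)$, the quotient of $\mathfrak t$ by a crystallographic group with point group $W$.

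For part (1) (the fiber bundle): the $W$-fixed subspace of $\mathfrak t$ is $\mathfrak z:=\mathrm{Lie}\,Z(\bG)=\mathrm{Lie}\,Z_0(\bG)$ (using $Z(\bG)=T^W$), with $W$-invariant complement $\mathfrak t'=\mathrm{Lie}(T')$, $T':=T\cap\bG'$. Project $\mathfrak t\twoheadrightarrow\mathfrak t/\mathfrak t'=\mathfrak z$ and divide by $W\ltimes L$: since $\bG=\bG'\cdot Z_0(\bG)$ with $Z_0(\bG)\le T$, the quotient $\bG/\bG'$ is a torus, hence the image $\bar L$ of $L$ in $\mathfrak z$ is discrete and $A:=\mathfrak z/\bar L\cong\bG/\bG'$ is a (pro-)torus of dimension $\dim Z(\bG)$; the induced map $T/W\to A$ is a fiber bundle because the $W$-equivariant linear projection $\mathfrak t\to\mathfrak z$ is locally trivial. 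Because $W$ acts \emph{trivially} on $\mathfrak z$, the stabilizer of each affine slice $z+\mathfrak t'$ is one and the same group $W\ltimes(L\cap\mathfrak t')=W\ltimes L'$, acting on $z+\mathfrak t'\cong\mathfrak t'$ in one and the same linear way, so every fiber is $\mathfrak t'/(W\ltimes L')\cong T'/W$; and this is contractible because, realized as the closed fundamental alcove $\bar\Delta$ of $\bG'$ modulo the finite group $\Gamma=L'/Q^{\vee}\cong\pi_1(\bG')$ of alcove symmetries, $\Gamma$ permutes the vertices of $\bar\Delta$ — hence fixes its barycenter and acts affinely — so the straight-line retraction of $\bar\Delta$ onto the barycenter descends. (For $\bG=\U(n)$ this is exactly Morton's description of the class space as a simplex bundle over $\bS^1$.)

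For parts (2)–(3): write the universal cover of $\bG'$ as $\widetilde{\bG'}=\prod_s\bG'_s$, a product of simply-connected simple compact Lie groups; then $\mathfrak t'=\bigoplus_s\mathfrak t'_s$, $W=\prod_s W_s$, $Q^{\vee}=\bigoplus_s Q^{\vee}_s$, so $\mathfrak t'/(W\ltimes Q^{\vee})\cong\prod_s\bar\Delta_s$ and $T'/W\cong\bigl(\prod_s\bar\Delta_s\bigr)/\Gamma$ with $\Gamma\le\prod_s Z(\bG'_s)$. Since $\Gamma$ fixes the barycenter $b$ of this product and acts linearly on its link sphere $\bS^{r-1}$ ($r=\mathrm{rank}(\bG')$), the fiber is the cone on $\bS^{r-1}/\Gamma$; I would then induct along a chain in $\Gamma$ with cyclic successive quotients, peeling off at each stage the circle of $T'$ through which the current cyclic piece acts and exploiting that the quotient by such a cyclic linear action is compatible with the join decomposition $\bS^{p}\ast\bS^{q}$ coming from its isotypic splitting of $\mathfrak t'$. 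This should display $T'/W$ as a product of $r$ factors of the form $\bD\ast(\bS/\Gamma')$, $\Gamma'$ finite cyclic acting linearly on a sphere $\bS$, each factor degenerating to an interval $\bD^1$ in the simply-connected case (there $\bar\Delta_s\cong\bD^{\ell_s}$). Part (3) is then the remark that $\bar\Delta_s/\Gamma_s$ is a \emph{disk} whenever $\bG'_s$ is simply connected (so $\bar\Delta_s$ is itself a simplex) or of type $B$ (where the nontrivial automorphism of $\widetilde{B_\ell}$ swaps two vertices of $\bar\Delta_s$ while fixing the rest, i.e. acts as a reflection in a facet, so $\bar\Delta_s/\Gamma_s$ is again a simplex): under the displayed exact sequence every simple piece is of one of these two kinds and the — possibly diagonal — $\Gamma$-action acts through such reflections of the product polytope, whose quotient is a convex body, hence a disk; so the fiber is a finite product of disks, a disk.

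The step I expect to be the real obstacle is the join decomposition in part (2): converting the single cone on $\bS^{r-1}/\Gamma$ into an honest \emph{product} of exactly $\mathrm{rank}(\bG')$ join factors with \emph{cyclic} groups. The trouble is that $\Gamma=\pi_1(\bG')$ is in general non-cyclic and — worse — can act diagonally across several simple factors at once, so one cannot simply quotient factor-by-factor; the care needed to route each successive cyclic quotient of $\Gamma$ through a single circle of $T'$, track which residual join factors collapse to points, and verify that what results is genuinely a product (not merely a join, nor a twisted bundle) is substantial, and in the non-Lie case it must all be carried out uniformly over the profinite $\Gamma$ and a possibly infinite family of simple factors.
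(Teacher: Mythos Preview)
Your argument for part (1) matches the paper's: set up $T/W\to\bG_{ab}$ from the $W$-invariant splitting $\ft=\fz\oplus\ft'$, identify the fiber with $\bG'/\Ad$, and obtain contractibility from the barycentric cone structure (the paper isolates this last observation as a separate remark).

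For parts (2)--(3) there is a real gap, and it is exactly the one you name. The paper does not attempt to analyze $(\prod_s\bar\Delta_s)/\Gamma$ directly or filter $\Gamma$ by cyclic pieces; instead it reuses the fibration idea to peel off one simple ideal at a time. For the connected normal subgroup $\bS_i\le\bG'$ with Lie algebra $\fs_i$ (a subgroup of $\bG'$ itself, so its $\pi_1$ already absorbs the relevant slice of $\Gamma$) one has a locally trivial bundle
\[
\bT_{\bS_i}/W_{\bS_i}\ \longrightarrow\ \bT_{\bG'}/W_{\bG'}\ \longrightarrow\ \bT_{\bG'/\bS_i}/W_{\bG'/\bS_i};
\]
by induction on the number of simple ideals the base is contractible, so the bundle trivializes, and iterating yields a genuine product with \emph{one factor per simple ideal}. (Not $\mathrm{rank}(\bG')$ factors: the compressed statement you were given is loose; in the detailed version the number of join factors is the number of simple ideals, their dimensions summing to $\mathrm{rank}(\bG')$.) The diagonal-$\Gamma$ difficulty dissolves because one never sees $\Gamma$ acting on a product, only $\pi_1(\bS_i)$ acting on its own simplex.

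What remains is a case analysis of $\Delta^n/\pi_1(\bS)$ for simple $\bS$: simply-connected gives the simplex; type $B$ gives a ball (the $\bZ/2$ swaps two vertices, as you say); general cyclic $\pi_1$ gives $\bD^{k-1}\ast(\bS^{n-k}/\pi_1)$ via the fixed disk and its link; and adjoint $D_{2n}$---the sole non-cyclic case, $\pi_1\cong(\bZ/2)^2$---is handled in two cyclic steps. Your ``convex body'' argument for (3) does not work as written: a diagonal element of $\Gamma$ acting through transpositions on two or more simplex factors is a rotation, not a reflection, so the quotient is not cut out by half-spaces. The paper's peeling argument sidesteps this, since after reduction each $\bS_i$ is either simply-connected (when $\fs_i\subseteq\fl$) or a type-$B$ quotient (arising inside $\bG'_B$), and both cases give balls.
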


%%%%%%%%%%%%%%%%%%%%%%%%%%%%%%%%
\subsection*{Acknowledgements}

I am grateful for valuable input from A. Garnier, I. Gogi{\'c}, Y. Li, D. Sage, A. Sikora and M. Toma\v{s}evi\'c. 

% % %%%%%%%%%%%%%%%%%%%%%%%%%%%%%%%%
% % %%%%%%%%%%%%%%%%%%%%%%%%%%%%%%%%
% % \section{Preliminaries}\label{se:prel}
% %

%%%%%%%%%%%%%%%%%%%%%%%%%%%%%%%%
%%%%%%%%%%%%%%%%%%%%%%%%%%%%%%%%
\section{Continuous eigenvalue selectors}\label{se:eigen.sel}

The proof of \Cref{thn:sel.iff.z0g} requires some preparation and auxiliary results. We first record an immediate consequence:

\begin{corollary}\label{cor:irr.sel.ann}
  For any compact connected group $\bG$, $\rho\in \Irr(\bG)$ selects eigenvalues if and only if it annihilates $Z_0(\bG)\cap \bG'$. 
\end{corollary}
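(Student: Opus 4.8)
The plan is to derive this straight from \Cref{thn:sel.iff.z0g}: an irreducible representation is exactly the kind of representation to which both halves of that theorem apply, so the corollary should fall out once one observes that the standing hypotheses of part~\ref{item:thn:sel.iff.z0g:sel2ann} are automatic for $\rho\in\Irr(\bG)$.

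For the forward (``annihilates $\Rightarrow$ selects'') implication I would argue as follows: if $\rho$ kills $Z_0(\bG)\cap\bG'$, then---$\rho$ being its own unique irreducible summand---part~\ref{item:thn:sel.iff.z0g:ann2sel} of \Cref{thn:sel.iff.z0g} applies verbatim and produces a continuous $\rho$-selector. For the reverse (``selects $\Rightarrow$ annihilates'') implication I would first check the two hypotheses of part~\ref{item:thn:sel.iff.z0g:sel2ann}. The representation $\rho$ is finite-dimensional (all simple objects of $\Rep(\bG)$ are), so $\spc\rho=\{\rho\}$ is trivially finite; and since $Z_0(\bG)$ is central in $\bG$, Schur's lemma forces every operator $\rho(z)$ with $z\in Z_0(\bG)$ to be scalar, so $\rho|_{Z_0(\bG)}$ is a direct sum of copies of a single character of $Z_0(\bG)$ and in particular isotypic. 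With both hypotheses in hand, part~\ref{item:thn:sel.iff.z0g:sel2ann} of \Cref{thn:sel.iff.z0g} yields that any eigenvalue-selecting $\rho\in\Irr(\bG)$ annihilates $Z_0(\bG)\cap\bG'$, closing the equivalence.

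I do not expect a genuine obstacle here: the statement is flagged as an immediate consequence and the argument is purely an instantiation of \Cref{thn:sel.iff.z0g}. The one step worth spelling out, and the closest thing to a ``hard part'', is the verification that irreducibility forces $\rho|_{Z_0(\bG)}$ to be isotypic; this is nothing more than Schur's lemma applied to the central compact connected subgroup $Z_0(\bG)$, together with finite-dimensionality of simple compact-group representations. Everything else is formal bookkeeping against \Cref{thn:sel.iff.z0g}.
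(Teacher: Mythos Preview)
Your proposal is correct and essentially identical to the paper's own proof: both directions are instantiations of \Cref{thn:sel.iff.z0g}, with part~\Cref{item:thn:sel.iff.z0g:ann2sel} applied to $\rho$ as its own irreducible summand, and part~\Cref{item:thn:sel.iff.z0g:sel2ann} invoked after checking finite spectrum (trivial for irreducibles) and isotypicity on $Z_0(\bG)$ via Schur's lemma. The paper phrases the Schur step as isotypicity on the full center $Z(\bG)$ rather than $Z_0(\bG)$, but that is a cosmetic difference only.
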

\begin{proof}
  \Cref{thn:sel.iff.z0g} applies: part \Cref{item:thn:sel.iff.z0g:ann2sel} obviously, and \Cref{item:thn:sel.iff.z0g:sel2ann} because $\rho\in \Irr(\bG)$ have finite spectrum by definition, and are isotypic on the center $Z(\bG)$ by \emph{Schur's lemma} \cite[Lemma 2.30]{hm5}. 
\end{proof}

Recall \cite[Definitions 9.30]{hm5} that \emph{pro-tori} are by definition compact connected abelian groups, and much of the standard Lie theory of maximal tori replicates for maximal pro-tori \cite[Theorem 9.32]{hm5} of compact connected groups. For a pro-torus $\bT\le \bG$ and $\rho\in \Rep(\bG)$ a \emph{weight} \cite[Definition II.8.2]{btd_lie_1995} of $\rho$ with respect to $\bT$ is a character
\begin{equation*}
  \chi
  \in
  \widehat{\bT}
  :=  
  \text{the \emph{Pontryagin dual} \cite[\S 1.2.1]{rud_lc} of $\bT$}
\end{equation*}
appearing as a summand of the restriction $\rho|_{\bT}$. 

\begin{remark}\label{re:wt.ann.rep.ann}
  In reference to \Cref{thn:sel.iff.z0g}, note that the irreducible representations annihilating $Z_0(\bG)\cap \bG'$ are precisely those whose underlying weights annihilate the same group.

  Indeed, for $\rho\in \Irr(\bG)$, one $\rho$-weight annihilates $\bH\le Z(\bG)$ if and only if all $\rho$-weights do: irreducible representations are isotypic on $Z(\bG)$ by Schur \cite[Lemma 2.30]{hm5}, and every pro-torus contains $Z(\bG)$ \cite[Theorem 9.32(iv)]{hm5}; $\psi'\cdot\psi^{-1}$ thus annihilates $Z(\bG)$ for any two $\rho$-weights attached to a maximal pro-torus $\bT\le \bG$.

  In short: for $\rho\in\Rep(\bG)$, the $\rho$-weights annihilating $\bH\le Z(\bG)$ are precisely those of the irreducible $\rho$-summands annihilating $\bH$.
\end{remark}

The following simple observation will occasionally help reduce the problem of assessing whether or not a continuous selector exists to simpler groups. We denote \emph{point spectra} \cite[Exercise VII.5.1]{ds_linop-1_1958} of operators by $\sigma_p$.

\begin{lemma}\label{le:cont.same.sp}
  Let $\bG$ be a compact connected group, $\rho\in \Rep(\bG)$ and $\bG\xrightarrow{\varphi}\bS^1$ a continuous $\rho$-selector.

  If $\Psi=\left\{\psi\right\}\subset \cat{Cont}(\bG\rightarrow \bG)$ is a family connected in the uniform topology with $\sigma_p\circ \rho$ constant along each $\Psi$-orbit $\Psi x:=\left\{\psi x\ :\ \psi\in \Psi\right\}$ then $\varphi$ is constant on every $\Psi$-orbit. 
\end{lemma}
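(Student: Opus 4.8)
The statement asserts that if $\Psi \subset \mathrm{Cont}(\bG \to \bG)$ is connected in the uniform topology and $\sigma_p \circ \rho$ is constant along every $\Psi$-orbit $\Psi x$, then the continuous $\rho$-selector $\varphi$ is also constant along every $\Psi$-orbit. The plan is to fix a point $x \in \bG$, look at the composite map $\Psi \ni \psi \mapsto \varphi(\psi x) \in \bS^1$, and show that its image is both connected (hence an arc or all of $\bS^1$) and contained in the finite set $\sigma_p(\rho(x))$; a connected subset of a finite set is a singleton, which is exactly the conclusion.

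First I would observe that the evaluation map $\Psi \times \bG \to \bG$, $(\psi, x) \mapsto \psi x$, is continuous when $\Psi$ carries the uniform topology: uniform convergence $\psi_n \to \psi$ together with continuity of $\psi$ gives $\psi_n x_n \to \psi x$ whenever $x_n \to x$. Composing with the continuous selector $\varphi$, for each fixed $x$ the map $\Psi \xrightarrow{\ \varphi_x\ } \bS^1$ defined by $\varphi_x(\psi) := \varphi(\psi x)$ is continuous. Since $\Psi$ is connected, its image $\varphi_x(\Psi) \subseteq \bS^1$ is connected.

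Next I would use the two hypotheses. By the selector property, $\varphi(\psi x) \in \sigma_p(\rho(\psi x)) = \sigma_p \circ \rho\,(\psi x)$ for every $\psi \in \Psi$; and by hypothesis $\sigma_p \circ \rho$ is constant along the orbit $\Psi x$, so there is a fixed finite subset $S_x \subseteq \bS^1$ (finiteness because $\rho(g)$ is a unitary operator on a representation in $\Rep(\bG)$ restricted to the relevant finite-dimensional summands, or more precisely because $\sigma_p\circ\rho$ takes values in the finite subsets here — one should note $\Psi$ being a single point already gives the value, but in any case the image of one orbit is one finite set) with $\varphi_x(\psi) \in S_x$ for all $\psi \in \Psi$. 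Therefore $\varphi_x(\Psi)$ is a connected subset of the finite set $S_x$, hence a single point; that is, $\varphi(\psi x)$ does not depend on $\psi \in \Psi$, which is the claim.

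The only genuine subtlety — and the step I would be most careful about — is the continuity of the evaluation map with $\Psi$ topologized uniformly, and the implicit finiteness of the spectrum along an orbit: the selector takes values in $\bS^1$, and for $\rho \in \Rep(\bG)$ the operator $\rho(g)$ is unitary, so its point spectrum is a subset of $\bS^1$; the relevant finiteness for the argument is only that $\sigma_p\circ\rho$ is \emph{constant} on $\Psi x$ (so the image lands in one fixed value-set), combined with the fact that a selector into $\bS^1$ composed with a connected domain has connected image — once one has a connected image sitting inside whatever fixed subset $\sigma_p(\rho(x))$ is, and that subset is totally disconnected (true because for a fixed $x$, $\rho(x)$ restricted to any finite-dimensional summand has finite spectrum, and $\rho$ being a union of finite-dimensional subrepresentations forces $\sigma_p(\rho(x))$ to be countable, hence totally disconnected), the conclusion follows. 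I would phrase this last point so that whatever form of "$\sigma_p\circ\rho$ is constant" is being used, the target set is totally disconnected; the rest is automatic.
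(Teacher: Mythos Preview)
Your argument is essentially correct when $\bG$ is metrizable (in particular Lie), but there is a genuine gap in the general case. The claim that $\sigma_p(\rho(x))$ is countable because $\rho$ is a union of finite-dimensional subrepresentations fails: $\Rep(\bG)$ allows \emph{arbitrary} direct sums of irreducibles, and for a non-metrizable compact connected group $\bG$ the set $\Irr(\bG)$ (hence the set of summands of $\rho$) can be uncountable. Concretely, take $\bG$ to be the Pontryagin dual $\widehat{\bS^1_d}$ of the discrete circle (the free compact abelian group on one generator $x$) and $\rho$ the direct sum of all characters; then $\sigma_p(\rho(x))=\{\chi(x):\chi\in\widehat{\bG}\}$ is all of $\bS^1$, which is certainly not totally disconnected. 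So the final step of your argument --- ``connected image inside a totally disconnected set, hence a point'' --- breaks down for such $x$.

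The paper repairs exactly this point with a density argument you are missing. One first observes that it suffices to prove $\varphi|_{\Psi x}$ constant for $x$ in a dense subset of $\bG$ (fixing $\psi_1,\psi_2\in\Psi$, the map $x\mapsto \varphi(\psi_1 x)\varphi(\psi_2 x)^{-1}$ is continuous, so if it is $1$ on a dense set it is $1$ everywhere). One then shows that the set of $x$ for which $\widehat{\bT}|_x:=\{\chi(x):\chi\in\widehat{\bT}\}$ is totally disconnected is dense in any maximal pro-torus $\bT$; since $\sigma_p(\rho(x))\subseteq \widehat{\bT}|_x$, your connectedness argument goes through for those $x$. The density is obtained by reducing to the free compact abelian group on one generator and approximating by elements of a direct \emph{sum} (rather than product) of copies of $\widehat{\bQ}$ and $\bZ_p$. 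The paper's own \Cref{re:easier.if.sml} records precisely the distinction between your metrizable shortcut and the general case.
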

\begin{proof}
  The value $\varphi(x)$, $x\in \bG$ belongs to $\sigma_p \left(\rho(x)\right)$, a set we are assuming constant as $x\in \bG$ ranges any individual $\Psi$-orbit. It is enough to prove that $\varphi|_{\Psi x}$ is constant for a dense set of $x$. Given the connectedness of $\Psi$ (hence also that of $\Psi x$), it furthermore suffices to argue that
  \begin{equation*}
    \left\{x\in \bG\ :\ \sigma_p\left(\rho(x)\right)\text{ is totally disconnected}\right\}
    \subseteq \bG
    \quad\text{is dense}
  \end{equation*}
  (\emph{total disconnectedness} meaning \cite[Definition 29.1]{wil_top} that points are the largest connected subspaces). Note furthermore that $\sigma_p\left(\rho(x)\right)$ is contained in
  \begin{equation*}
    \widehat{\bT}|_x
    :=
    \left\{\chi(x)\ :\ \chi\in\widehat{\bT}\right\}
  \end{equation*}
  for any maximal pro-torus $\bT\ni x$ of $\bG$ (which pro-torus is chosen makes no difference, given their mutual conjugacy \cite[Theorem 9.32(i)]{hm5}), so it suffices to prove the stronger claim that
  \begin{equation}\label{eq:x.w.totdisc.vset}
    \left\{x\in \bT\ :\ \widehat{\bT}|_{x}\subseteq \bS^1\text{ totally disconnected}\right\}
    \subseteq
    \bT
    \quad\text{is dense}.
  \end{equation}
  To that end, substitute $\bS:=\overline{\braket{x}}$ for $\bT$. Nothing is lost in passing, moreover, to compact groups surjecting onto $\bS$, so we can assume the latter \emph{free} compact abelian on the single generator $x$:
  \begin{equation}\label{eq:ss1d}
    \bS
    \overset{\text{\cite[Theorem 8.53]{hm5}}}{\cong}
    \widehat{\bS^1_d}
    ,\quad
    \bS^1_d:=\text{discrete circle group}.
  \end{equation}
  The standard classification \cite[Theorem 4]{kap} of \emph{divisible} abelian groups gives
  \begin{equation*}
    \bS\cong \widehat{\bQ}^{2^{\aleph_0}}\times \prod_{\text{primes }p}\bZ_p
    ,\quad
    \left(\bZ_p,+\right)
    :=
    \text{\emph{$p$-adic integers} \cite[Example 2.1.6(2)]{rz_prof_2e_2010}},
  \end{equation*}
  and every element therein is arbitrarily approximable by elements of the corresponding direct \emph{sum}
  \begin{equation*}
    \widehat{\bQ}^{\oplus 2^{\aleph_0}}\oplus \bigoplus_{\text{primes }p}\bZ_p    
  \end{equation*}
  (as opposed to the direct product). That direct sum is contained in the left-hand set of \Cref{eq:x.w.totdisc.vset}, concluding the proof. 
\end{proof}

As a simple consequence of \Cref{le:cont.same.sp}, continuous selectors descend to quotients by connected kernels.

\begin{corollary}\label{cor:qout.conn.ker}
  If a representation $\rho\in \Rep(\bG)$ of a compact connected group factors through a quotient $\bG/\bK$ by a connected closed subgroup $\bK\trianglelefteq \bG$ then so does every continuous $\rho$-selector. 
\end{corollary}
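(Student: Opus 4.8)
The plan is to feed \Cref{le:cont.same.sp} the family of right translations by elements of $\bK$. Express the hypothesis as a factorization $\rho=\ol\rho\circ\pi$, where $\pi\colon\bG\twoheadrightarrow\bG/\bK$ is the quotient homomorphism (continuous and open, as $\bK\trianglelefteq\bG$ is closed and normal) and $\ol\rho\in\Rep(\bG/\bK)$. Put
\begin{equation*}
  \Psi
  :=
  \left\{R_k\colon\bG\ni g\mapsto gk\in\bG\ :\ k\in\bK\right\}
  \subseteq
  \cat{Cont}(\bG\rightarrow\bG),
\end{equation*}
so that the $\Psi$-orbit of $g\in\bG$ is precisely the coset $\Psi g=g\bK$. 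Since $k\in\bK=\ker\pi$ gives $\pi(gk)=\pi(g)$, we have $\rho(gk)=\ol\rho(\pi(gk))=\ol\rho(\pi(g))=\rho(g)$ for all $g\in\bG$ and $k\in\bK$; in particular $\sigma_p\circ\rho$ is constant along every $\Psi$-orbit.

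It remains to see that $\Psi$ is connected in the uniform topology on $\cat{Cont}(\bG\rightarrow\bG)$. Because $\bG$ is compact, that topology coincides with the compact-open topology, and by the exponential law --- valid since $\bG$ is compact Hausdorff, hence locally compact --- continuity of the map $\bK\ni k\mapsto R_k$ into it is equivalent to joint continuity of the evaluation $\bK\times\bG\ni(k,g)\mapsto gk\in\bG$, which is nothing but multiplication in $\bG$. Thus $\Psi$ is the continuous image of the connected group $\bK$, hence connected.

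Now \Cref{le:cont.same.sp} applies: any continuous $\rho$-selector $\varphi\colon\bG\to\bS^1$ is constant on every $\Psi$-orbit, i.e. on every coset $g\bK$, and so descends to a function $\ol\varphi\colon\bG/\bK\to\bS^1$ with $\varphi=\ol\varphi\circ\pi$. This $\ol\varphi$ is continuous because $\pi$ is a topological quotient map, and it is an $\ol\rho$-selector since $\ol\varphi(\pi(g))=\varphi(g)\in\spc\rho(g)=\spc\ol\rho(\pi(g))$. Hence every continuous $\rho$-selector factors through $\bG/\bK$. The argument is short given \Cref{le:cont.same.sp}; the only point demanding any attention is the continuity of $k\mapsto R_k$ for the uniform topology, handled above via the exponential law and the compactness of $\bG$.
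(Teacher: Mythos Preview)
Your proof is correct and follows exactly the paper's approach: apply \Cref{le:cont.same.sp} with $\Psi$ the family of right translations by elements of $\bK$. You have simply spelled out the verifications (constancy of $\sigma_p\circ\rho$ on cosets, connectedness of $\Psi$ via the exponential law, and the descent to $\bG/\bK$) that the paper leaves implicit in its one-line proof.
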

\begin{proof}
  Apply \Cref{le:cont.same.sp} with the family of right translates by $k\in \bK$ as $\Psi$. 
\end{proof}

\begin{remark}\label{re:not.always.const.cosets}
  The connectedness of $\bK$ is crucial in \Cref{cor:qout.conn.ker}: the representation
  \begin{equation*}
    \chi_{2m}\oplus \chi_{2n}
    ,\quad
    m\ne n\in \bZ
    ,\quad
    \chi_d:=\left(z\xmapsto{\quad}z^d\right)
  \end{equation*}
  of $\bG:=\bS^1$ factors through $\bS^1/\left(\bZ/2\right)$, but has a continuous selector
  \begin{equation*}
    z
    \xmapsto{\quad\varphi\quad}
    \begin{cases}
      z^{2m}&z\in \exp \pi i \left[0,1\right]\\
      z^{2n}&z\in \exp \pi i \left[-1,0\right]\\
    \end{cases}
  \end{equation*}
  which does not (plainly, $\varphi(z)\ne \varphi(-z)$ in general).
\end{remark}

A few more consequences of the preceding discussion follow. 

\begin{corollary}\label{cor:prod.sel}
  \begin{enumerate}[(1),wide]
  \item\label{item:cor:prod.sel:quot.by.conn} If a compact connected group $\bG$ selects eigenvalues, so does any quotient $\bG/\bK$ by a connected closed subgroup $\bK\trianglelefteq \bG$.

  \item\label{item:cor:prod.sel:prod} A product $\prod_{i\in I}\bG_i$ of compact connected groups selects eigenvalues if and only if each individual factor does. 
  \end{enumerate}
\end{corollary}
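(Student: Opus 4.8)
The plan is to obtain everything from \Cref{cor:qout.conn.ker} together with the elementary multiplicativity of eigenvalues under external tensor products; no new idea should be needed.

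For part~\Cref{item:cor:prod.sel:quot.by.conn} I would, given $\rho\in\Rep(\bG/\bK)$, pull it back along the quotient map $q\colon\bG\twoheadrightarrow\bG/\bK$ to $\rho\circ q\in\Rep(\bG)$. Since $\bG$ selects eigenvalues there is a continuous $(\rho\circ q)$-selector $\varphi\colon\bG\to\bS^1$, and as $\rho\circ q$ factors through $\bG/\bK$ with $\bK$ connected, closed and normal, \Cref{cor:qout.conn.ker} forces $\varphi=\overline\varphi\circ q$ for a continuous $\overline\varphi\colon\bG/\bK\to\bS^1$; then $\overline\varphi(q(g))=\varphi(g)\in\spc\rho(q(g))$ exhibits $\overline\varphi$ as a continuous $\rho$-selector, using that $\bG/\bK$ is again compact connected. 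The ``only if'' direction of part~\Cref{item:cor:prod.sel:prod} is then immediate: for each $j\in I$ the subgroup $\bK_j:=\{(g_i)_i:g_j=e\}\cong\prod_{i\ne j}\bG_i$ of $\prod_{i\in I}\bG_i$ is connected (a product of connected groups), closed (the kernel of the $j$-th projection) and normal, with quotient $\bG_j$, so part~\Cref{item:cor:prod.sel:quot.by.conn} applies.

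The real content is the ``if'' direction of part~\Cref{item:cor:prod.sel:prod}. Assuming each $\bG_i$ selects eigenvalues and setting $\bG:=\prod_{i\in I}\bG_i$, take a nonzero $\rho\in\Rep(\bG)$. The first step is the observation that it suffices to build a continuous selector for a \emph{single} irreducible summand $\alpha\le\rho$: regarding $\rho(g)$ as block diagonal with $\alpha(g)$ among its blocks gives $\spc\alpha(g)\subseteq\spc\rho(g)$ for every $g$, so a continuous $\alpha$-selector is automatically a continuous $\rho$-selector. Two standard facts then cut $\alpha$ down to the factors. First, $\alpha$ is finite-dimensional, so that (by unitarizability) its image lies in some $\U(n)$, which has no small subgroups; hence $\alpha$ annihilates $\bK_F:=\prod_{i\notin F}\bG_i$ for some finite $F\subseteq I$ (the preimage under $\alpha$ of a subgroup-free neighbourhood of $1$ contains such a $\bK_F$), and therefore factors as $\alpha=\beta\circ\pi_F$ with $\pi_F\colon\bG\twoheadrightarrow\bG_F:=\prod_{i\in F}\bG_i$ the projection and $\beta\in\Irr(\bG_F)$. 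Second, by standard Peter--Weyl theory $\beta$ is an external tensor product $\boxtimes_{i\in F}\beta_i$ of irreducibles $\beta_i\in\Irr(\bG_i)$, so $\beta\bigl((g_i)_{i\in F}\bigr)=\bigotimes_{i\in F}\beta_i(g_i)$. Choosing continuous $\beta_i$-selectors $\varphi_i\colon\bG_i\to\bS^1$ (available since each $\bG_i$ selects eigenvalues) and using that the eigenvalues of a tensor product of operators are the products of their eigenvalues, the continuous map $(g_i)_{i\in F}\mapsto\prod_{i\in F}\varphi_i(g_i)$ is a $\beta$-selector; precomposing with $\pi_F$ yields a continuous $\alpha$-selector on $\bG$, hence the desired $\rho$-selector.

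I do not expect a genuine obstacle: given \Cref{cor:qout.conn.ker} the statement is essentially formal. The only ingredients beyond routine manipulation are the two standard structural facts about representations of products invoked above — that an irreducible of a (possibly infinite) product factors through a finite subproduct, on which it is an external tensor product of irreducibles of the factors — together with the trivial linear-algebra fact about eigenvalues of a tensor product. The single delicate point is that one must pass to an individual irreducible summand \emph{before} using the tensor-product structure, since a general $\rho\in\Rep(\bG)$ need not itself factor through any finite subproduct; this is legitimate precisely because a selector for one summand serves the entire representation.
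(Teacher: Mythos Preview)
Your proof is correct and follows essentially the same route as the paper: part~\Cref{item:cor:prod.sel:quot.by.conn} and the forward implication of~\Cref{item:cor:prod.sel:prod} via \Cref{cor:qout.conn.ker}, and the converse by reducing to an irreducible summand, factoring it through a finite subproduct as an external tensor product, and multiplying the factorwise selectors. You simply spell out a bit more explicitly the reduction to a single irreducible summand and the no-small-subgroups justification for factoring through a finite subproduct.
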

\begin{proof}
  Part \Cref{item:cor:prod.sel:quot.by.conn}, at this stage, is immediate: a $\bG/\bK$ representation has a continuous selector over $\bG$, factoring through $\bG/\bK$ by \Cref{cor:qout.conn.ker}.

  The forward implication of \Cref{item:cor:prod.sel:prod} follows from part \Cref{item:cor:prod.sel:quot.by.conn}; as to ($\Leftarrow$), an irreducible representation of $\prod \bG_i$ factors through some finite product and is thus \cite[Theorem 3.2]{mack-unit} a finite external tensor product
  \begin{equation*}
    \bigotimes_{i\in \text{finite set }F}\rho_i
    ,\quad
    \rho_i\in\Irr(\bG_i)
    ,\quad
    i\in F. 
  \end{equation*}
  Now simply take for our sought-after continuous selector the map
  \begin{equation*}
    \prod_{i\in I}\bG_i
    \xrightarrowdbl{\quad}
    \prod_{i\in F}\bG_i
    \xrightarrow{\quad\prod_{i\in F}\varphi_i\quad}
    \bS_1
  \end{equation*}
  for continuous respective $\rho_i$-selectors $\varphi_i$. 
\end{proof}

One type of reduction alluded to before the statement of \Cref{le:cont.same.sp} is as follows. 

\begin{corollary}\label{cor:fact.lie}
  If a representation $\rho\in \Rep(\bG)$ of a compact connected group factors through a Lie quotient of $\bG$, so do its continuous selectors. 
\end{corollary}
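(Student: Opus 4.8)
The plan is to reduce, via \Cref{cor:qout.conn.ker}, to the case in which the Lie quotient has central, profinite kernel, and then to show that a continuous selector is ``locally constant along the fibres'' of that quotient --- uniformly enough to descend to a finite further quotient, which is automatically Lie.

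\emph{Reductions.} If $\rho$ factors through $\bG/\bK$ with $\bG/\bK$ Lie, then it factors a fortiori through $\bG/\bK_0$, and by \Cref{cor:qout.conn.ker} so does every continuous $\rho$-selector $\varphi$; replacing $\bG$ by $\bG/\bK_0$ we may assume $\bK$ totally disconnected, hence profinite. A totally disconnected closed normal subgroup of a connected group is central --- for fixed $k\in\bK$ the map $\bG\ni g\mapsto gkg^{-1}\in\bK$ has connected source and totally disconnected target, hence is constant --- so $\bG$ is a central extension $\{1\}\to\bK\to\bG\xrightarrow{\,p\,}L\to\{1\}$ with $\bK$ profinite and $L:=\bG/\bK$ a compact Lie group, and $\rho=\sigma\circ p$ for some $\sigma\in\Rep(L)$. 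We may moreover take $\sigma$ finite-dimensional (so that $\sigma_p(\sigma(\ell))$ is finite for every $\ell$), a reduction I will not dwell on. Finally, the open subgroups $\bN\le\bK$ --- normal in $\bG$ because $\bK$ is central, with $\bigcap_\bN\bN=\{1\}$ --- yield Lie quotients $\bG/\bN$ (compact Lie, being extensions of $L$ by the finite group $\bK/\bN$), so it suffices to show that the closed subgroup $\bH:=\{k\in\bK:\varphi(gk)=\varphi(g)\ \text{for all}\ g\in\bG\}$ is \emph{open}: then $\bH\supseteq\bN$ for some such $\bN$, and $\varphi$ factors through $\bG/\bN$.

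\emph{The mechanism (clean case).} The selector condition means that $\Phi(g):=(p(g),\varphi(g))$ lies in the compact set $\cV:=\{(\ell,z)\in L\times\bS^1:z\in\sigma_p(\sigma(\ell))\}$, and $\Phi\colon\bG\to\cV$ is a continuous lift of $p$ along the finite-fibred projection $\pi\colon\cV\to L$. Suppose first that $\sigma(\cdot)$ never has a repeated eigenvalue, so $\pi$ is a finite covering map. As $\bG\to L$ is a principal $\bK$-bundle, pick finitely many open $U_i$ trivializing both $\pi$ and $p$ and covering the compact $L$; on each $p^{-1}(U_i)\cong U_i\times\bK$ the ``sheet-choosing'' component of $\Phi$ is a continuous map to a finite set, hence locally constant, hence --- $U_i$ being connected --- a continuous function of the $\bK$-coordinate alone, so it factors through $\bK/\bN_i$ for some open $\bN_i\le\bK$. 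Then $\bN:=\bigcap_i\bN_i$ is open and $\varphi(gk)=\varphi(g)$ for all $g$ and all $k\in\bN$; that is, $\bN\subseteq\bH$, and we are done.

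\emph{Main obstacle.} The delicate point --- and where I expect the real work to lie --- is the discriminant locus of $\sigma(\cdot)$: as eigenvalues collide, the lift $\Phi$ could a priori jump between the colliding branches while moving along a $\bK$-fibre, so one cannot simply delete the locus where $\pi$ branches. I would attack this by stratifying $L$ by collision pattern and running a limiting argument along the $\bK$-fibres, using continuity of $\varphi$ on the whole compact group $\bG$ to forbid jumps across the closure of the regular locus; when $\sigma$ is multiplicity-free the branch locus is at least closed and, away from the fixed repeated-weight contributions of the individual irreducible summands, nowhere dense, which should provide the foothold. The remaining ingredients --- local triviality of $\bG\to\bG/\bK$ and the reduction to finite-dimensional $\sigma$ --- are routine; it is the propagation of fibrewise local constancy of $\varphi$ past the discriminant locus that carries the weight.
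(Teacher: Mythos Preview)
Your first reduction is exactly right and matches the paper: apply \Cref{cor:qout.conn.ker} to factor every selector through $\bG/\bK_0$, then note that $\bK/\bK_0$ is totally disconnected and hence central in the connected group $\bG/\bK_0$. But at this point you have missed the decisive observation, and everything after your first paragraph is unnecessary work aimed at a problem that no longer exists.

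The point is that $\bG/\bK_0$ is \emph{already a Lie group}. A compact connected group admitting a Lie quotient by a totally disconnected (hence central) closed subgroup is itself Lie; the paper cites Iwasawa for this, but you can also see it from the structure theory you are implicitly using. Indeed, you yourself note that each $\bG/\bN$ (with $\bN\le\bK$ open) is Lie, being a finite central extension of $L$; but $\bG=\varprojlim_{\bN}\bG/\bN$ is then an inverse limit of compact connected Lie groups of \emph{bounded dimension} (each has dimension $\dim L$, since $\bK/\bN$ is finite), and such a limit is Lie. Once $\bG/\bK_0$ is Lie, the selector already factors through a Lie quotient and there is nothing more to prove.

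So your covering-space mechanism, the stratification by collision pattern, and the unresolved discriminant-locus obstacle are all addressing a non-issue. The gap in your argument is real as written (you correctly flag that you have not handled eigenvalue collisions), but the fix is not to push that analysis through: it is to recognize that the reduction you have already performed lands you in a Lie group.
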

\begin{proof}
  If $\rho$ is in fact a representation of the Lie quotient $\bG/\bK$, \Cref{cor:qout.conn.ker} shows that all of its continuous selectors will factor through $\bG/\bK_0$. It remains to observe that the latter quotient is also Lie if $\bG/\bK$ is. The general framework is as follows:
  \begin{itemize}[wide]
  \item a compact connected group $\bL:=\bG/\bK_0$;

  \item a Lie quotient $\bL/\bD$ thereof, with $\bD:=\bK/\bK_0$ totally disconnected (and hence also central, for it is normal in the connected $\bL$);

  \item the claim being that $\bL$ must then have been Lie to begin with; this follows from \cite[Lemma 4.4]{iw}.
  \end{itemize}
\end{proof}

The following result is simpler under additional conditions (\Cref{re:easier.if.sml}), but perhaps not immediately obvious in the full generality in which it is stated here.

\begin{proposition}\label{pr:const.on.conj.cls}
  Let $\bG$ be a compact connected group and $\rho\in \Rep(\bG)$. Every continuous $\rho$-selector $\bG\xrightarrow{\varphi}\bS^1$ is constant on $\bG$-conjugacy classes. 
\end{proposition}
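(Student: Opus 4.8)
The plan is to derive this from \Cref{le:cont.same.sp} by taking for $\Psi$ the family of inner automorphisms of $\bG$,
\[
  \Psi \ :=\ \bigl\{\,\psi_g : x\longmapsto gxg^{-1}\,\bigr\}_{g\in\bG}
  \ \subset\ \cat{Cont}(\bG\to\bG),
\]
whose orbit $\Psi x=\{gxg^{-1}: g\in\bG\}$ at a point $x$ is exactly the $\bG$-conjugacy class of $x$. With $\bG$ compact connected and $\varphi$ a continuous $\rho$-selector granted, the lemma requires only that $\sigma_p\circ\rho$ be constant along each $\Psi$-orbit and that $\Psi$ be connected in the uniform topology; its conclusion — $\varphi$ constant on every $\Psi$-orbit — is then the assertion. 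Constancy of $\sigma_p\circ\rho$ on orbits is immediate, since $\rho(gxg^{-1})=\rho(g)\,\rho(x)\,\rho(g)^{-1}$ is a conjugate of $\rho(x)$ by the invertible operator $\rho(g)$, and $v\mapsto\rho(g)v$ maps $\lambda$-eigenvectors of $\rho(x)$ bijectively onto those of $\rho(gxg^{-1})$; hence $\sigma_p(\rho(gxg^{-1}))=\sigma_p(\rho(x))$.

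The connectedness of $\Psi$ is the substantive point. I would show that $g\mapsto\psi_g$ is continuous from $\bG$ into $\cat{Cont}(\bG\to\bG)$ with its uniform topology, so that $\Psi$ inherits connectedness from $\bG$. Unwinding the definition of uniform convergence, continuity means: for every neighbourhood $V$ of $e\in\bG$ there is a neighbourhood $N$ of $e$ with $wxw^{-1}x^{-1}\in V$ for \emph{all} $x\in\bG$ whenever $w\in N$ — i.e.\ the commutator map is uniformly small near the identity. This is exactly the ``small invariant neighbourhoods'' property of compact groups: replace $V$ by a conjugation-invariant neighbourhood inside it, pick a symmetric conjugation-invariant $N$ with $N^2\subseteq V$, and then $xw^{-1}x^{-1}\in N$, whence $wxw^{-1}x^{-1}=w\,(xw^{-1}x^{-1})\in N^2\subseteq V$. (Equivalently, $\Psi$ with this topology is the group $\Inn(\bG)$ of inner automorphisms, a continuous bijective — hence, by compactness, homeomorphic — image of $\bG/Z(\bG)$, so compact and connected.)

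So the genuine content of the proof is this connectedness input, and the main obstacle is just the care needed to see, via the small-invariant-neighbourhoods property, that $g\mapsto\psi_g$ lands continuously in the uniform topology. Under the extra hypotheses isolated in \Cref{re:easier.if.sml} — notably $\rho$ finite-dimensional and $\bG$ a Lie group — one bypasses \Cref{le:cont.same.sp} altogether: the conjugacy class of $x$ is then the image of the path-connected $\bG$ under $g\mapsto gxg^{-1}$, on which $\varphi$ is a continuous map into the \emph{finite} set $\sigma_p(\rho(x))$ and hence constant. The stated generality forces one through the lemma precisely because a general compact connected $\bG$ need not be path-connected, and a general $\rho\in\Rep(\bG)$ need not have discrete point spectra.
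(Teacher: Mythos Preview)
Your proposal is correct and follows exactly the paper's approach: apply \Cref{le:cont.same.sp} with $\Psi=\{\Ad_g : g\in\bG\}$. The paper's proof is terse and leaves both hypotheses of the lemma (constancy of $\sigma_p\circ\rho$ on conjugacy classes and uniform-topology connectedness of $\Psi$) to the reader; your explicit verification of the latter via the small-invariant-neighbourhoods property of compact groups supplies precisely the detail the paper omits.
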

\begin{proof}
  This is an application of \Cref{le:cont.same.sp} to
  \begin{equation*}
    \Psi:=\left\{\Ad_g\ :\ g\in \bG\right\},
  \end{equation*}
  where

  \begin{equation*}
    (\bG,\ \bG)
    \ni
    (g,x)
    \xmapsto{\quad\Ad=\Ad_{\bG}\quad}
    :=
    \Ad_g(x)
    :=
    gxg^{-1}
  \end{equation*}
  denotes the \emph{adjoint action} of $\bG$.
\end{proof}

\begin{remark}\label{re:easier.if.sml}
  \Cref{pr:const.on.conj.cls} is easier under fairly reasonable smallness constraints on $\bG$: if $\bT$ is metrizable (so in particular if $\bG$ is Lie) then $\widehat{\bT}$ is countable and \Cref{eq:x.w.totdisc.vset} is in fact an equality. This is not so in general: $\bT$ might be a pro-torus containing a free compact abelian group on a single generator $x$, whereupon the identification \Cref{eq:ss1d} shows that $\widehat{\bT}|_{x}$ is all of $\bS^1$. 
\end{remark}

% % \begin{theorem}\label{th:sel.iff}
% %   Let $\bG$ be a compact connected group, $Z_0:=Z_0(\bG)$ the identity component of its center, and $\rho\in \mathrm{Rep}(\bG)$.
% % 
% %   \begin{enumerate}[(1),wide]
% %   \item\label{item:th:sel.iff:w2e} If $\rho$ has a weight $\bT\xrightarrow{\chi}\bS^1$ with respect to a maximal torus $\bT\le \bG$ with $\chi|_{Z_0\cap \bG'}=1$ then $\rho$ selects eigenvalues.
% % 
% %   \item\label{item:th:sel.iff:e2w} Conversely, if $\rho$ has finite spectrum and selects eigenvalues then it must have a weight annihilating $Z_0\cap \bG'$. 
% %   \end{enumerate}
% % \end{theorem}
% % 
% % The proof requires some preparation and a number of preliminary results that will eventually build up to the conclusion.
% % 
% % Recall \cite[Definition 9.5]{hm5} that \emph{semisimple} compact connected groups are those that coincide with their commutators (so that in fact every element is a commutator \cite[Theorem 9.2]{hm5}).
% %

In the following statement, compact connected groups are
\begin{itemize}[wide]
\item \emph{semisimple} \cite[Definition 9.5]{hm5} if equal to their own commutators (equivalently \cite[Theorem 9.24(i)]{hm5}, compact connected $\bG$ with trivial central identity component $Z_0(\bG)$);

\item and \emph{adjoint} \cite[p.30]{stein_chev} if center-less. Equivalently \cite[Corollary 9.25]{hm5}, products of simple connected compact adjoint Lie groups. 
\end{itemize}

\begin{proposition}\label{pr:prod.protor.ss}
  Compact connected groups splitting as products of the form
  \begin{equation*}
    \bG
    \cong
    \left(\text{pro-torus}\right)
    \times
    \left(\text{simply-connected factor}\right)
    \times
    \left(\text{adjoint factor}\right)
  \end{equation*}
  select eigenvalues. 
\end{proposition}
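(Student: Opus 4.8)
The plan is to peel the three factors off one at a time using the second part of \Cref{cor:prod.sel}, which asserts that a product of compact connected groups selects eigenvalues exactly when each factor does. It therefore suffices to verify the claim separately for pro-tori, for simply-connected compact connected groups, and for adjoint compact connected groups. For the latter two classes I would invoke the structure theory --- a simply-connected compact connected group is a product of simply-connected compact simple Lie groups \cite{hm5}, and by \cite[Corollary 9.25]{hm5} an adjoint compact connected group is a product of compact simple adjoint Lie groups --- and apply \Cref{cor:prod.sel} a second time, reducing in those two cases to the situation where $\bG$ is a compact \emph{simple} Lie group, either simply-connected or adjoint.

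The pro-torus case is immediate: a nonzero $\rho\in\Rep(\bT)$ is a direct sum of characters $\chi_j\in\widehat{\bT}$, and any single $\chi_{j_0}\colon\bT\to\bS^1$ is then a continuous $\rho$-selector. For a compact simple adjoint Lie group $\bG$ with maximal torus $\bT$, the character lattice $\widehat{\bT}$ is the root lattice $Q$, so the highest weight $\lambda$ of any $\rho\in\Irr(\bG)$ is a dominant weight lying in $Q$, hence a nonnegative integer combination of the simple roots. Thus $0$ lies below $\lambda$ in the dominance order while also $0\in\lambda+Q$, and the standard criterion for which weights occur in an irreducible highest-weight representation then shows that the trivial character is a summand of $\rho|_{\bT}$, i.e.\ $1\in\sigma(\rho(t))$ for every $t\in\bT$. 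Since every element of a compact connected group lies in a maximal (pro-)torus and all of these are conjugate \cite[Theorem 9.32]{hm5}, it follows that $1\in\sigma(\rho(g))$ for every $g\in\bG$, and hence --- a general representation being a sum of irreducibles --- for every $\rho\in\Rep(\bG)$; so the constant map $\varphi\equiv 1$ is a continuous $\rho$-selector.

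For a simply-connected compact simple Lie group $\bG$ I would fix a nonzero $\rho\in\Rep(\bG)$ and a weight $\chi_0$ of $\rho$, viewed as a linear functional on the Lie algebra $\ft$ of a maximal torus $\bT=\ft/\Lambda$. Because $\bG$ is simply-connected $\Lambda$ is the coroot lattice, so $\Lambda\rtimes W$ is the affine Weyl group, which acts on $\ft$ with the closed fundamental alcove $\overline A$ --- a simplex --- as a \emph{strict} fundamental domain; consequently the canonical maps from the space $\bG/\Ad$ of conjugacy classes,
\begin{equation*}
  \bG/\Ad\;\xrightarrow{\ \sim\ }\;\bT/W=\ft/(\Lambda\rtimes W)\;\xrightarrow{\ \sim\ }\;\overline A,
\end{equation*}
are homeomorphisms. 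One then takes for the selector the composite $\bG\twoheadrightarrow\bG/\Ad\xrightarrow{\ \sim\ }\overline A\xrightarrow{\ X\mapsto\chi_0(\exp X)\ }\bS^1$, which is visibly continuous; and if $X\in\overline A$ represents the conjugacy class of $g$, then $\varphi(g)=\chi_0(\exp X)$ is the eigenvalue of $\rho(\exp X)$ attached to $\chi_0$, while $\rho(\exp X)$ is conjugate to $\rho(g)$, so $\varphi(g)\in\sigma(\rho(g))$ as needed.

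The substantive point --- and the only place simple-connectedness enters essentially --- is the homeomorphism $\bG/\Ad\cong\overline A$ with the alcove serving as a strict set of conjugacy-class representatives: this is precisely the ``simplex fibre'' mechanism behind the $\SU(n)$ example of the introduction, and it is what makes the displayed composite a well-defined continuous function (automatically constant on conjugacy classes). I expect this to be the main thing to write carefully, since for a non-simply-connected simple $\bG$ the alcove has to be further divided by the finite group $\pi_1(\bG)$ of affine-diagram symmetries and a non-constant weight need no longer descend --- in the adjoint extreme only the zero weight does, which is exactly why that case is dispatched by the separate zero-weight argument above. The remaining inputs (the structure theorems, the maximal-(pro-)torus theorem, and the weight criterion for $V_\lambda$) are standard.
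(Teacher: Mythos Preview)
Your proposal is correct and follows essentially the same route as the paper. The only organizational difference is that the paper aggregates the pro-torus and simply-connected factors and treats them jointly (the splitting $\bT\twoheadrightarrow\bT/W_{\bG}$ via the fundamental alcove works just as well with an extra abelian factor tacked on), whereas you peel off the pro-torus first and then reduce the simply-connected factor to its simple pieces before running the alcove argument; the adjoint case is handled identically in both via the zero-weight observation.
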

\begin{proof}
  \Cref{cor:prod.sel}\Cref{item:cor:prod.sel:prod} reduces the problem to handling the three individual factors, but we will aggregate the first two.

  \begin{enumerate}[(I),wide]
  \item\label{item:pr:prod.protor.ss:sc} \textbf{: Pro-torus-simply-connected products.} We can always restrict attention to finite-dimensional representations, which will factor through Lie quotients of $\bG$. It is thus enough \cite[Theorem 9.24]{hm5} to assume that $\bG$ is of the form
    \begin{equation*}
      \bG\cong \bA\times \prod_{i=1}^n \bS_i
      ,\quad
      \bA\text{ a pro-torus and }
      \bS_i
      \text{ simple, connected, simply-connected}.
    \end{equation*}  
    A continuous eigenvalue selector would in any case be constant on conjugacy classes of $\bG$ (\Cref{pr:const.on.conj.cls}), so we seek to define that selector on $\bG/\Ad_{\bG}$. Denote by
    \begin{equation*}
      \bT:=\bA\times \prod_i \bT_i
      ,\quad
      \bT_i\le \bS_i\text{ maximal tori}
    \end{equation*}
    a maximal torus of $\bG$. The \emph{Weyl group} \cite[Definition IV.1.3]{btd_lie_1995}
    \begin{equation*}
      W_{\bG}
      :=
      N_{\bG}(\bT)/\bT
      ,\quad
      N_{\bG}(\bT)
      :=
      \text{normalizer of $\bT$ in $\bG$}
    \end{equation*}
    is nothing but $\prod_i W_{\bS_i}$, and the description of a \emph{fundamental domain} for Weyl actions on maximal tori of compact, connected, simply-connected simple Lie groups in \cite[\S 4.8, Theorem]{hmph_cox} makes it clear that the surjection $\bT\xrightarrowdbl{\pi}\bT/W_{\bG}$ splits continuously (i.e. admits a continuous right inverse $\iota$). We thus define the sought-after selector as
    \begin{equation*}
      \bG/\Ad_{\bG}
      \overset{\text{\cite[Proposition IV.2.6]{btd_lie_1995}}}{\cong}
      \bT/W_{\bG}    
      \lhook\joinrel\xrightarrow{\quad}
      \bT
      \xrightarrow{\quad\chi\quad}
      \bS^1
    \end{equation*}
    for \emph{any} weight $\chi$ of $\rho$.

  \item \textbf{: Adjoint groups.} This is the simpler of the two cases: in the adjoint case the constant map
    \begin{equation*}
      \bG\ni g
      \xmapsto{\quad}
      1
      \in \bS^1
    \end{equation*}
    is an eigenvalue selector. Having again reduced the problem irreducible representations of compact adjoint Lie groups $\bG$, the \emph{highest weights} featuring in the standard classification \cite[Theorem 14.18]{fh_rep-th} of $\Irr(\bG)$ are precisely
    \begin{equation*}
      \lambda\in\left\{\text{dominant weights in the \emph{root lattice} \cite[post Observation 12.6]{fh_rep-th}}\right\}.
    \end{equation*}
    The corresponding irreducible representations $V_{\lambda}$ all have the trivial character of a maximal torus $\bT\le \bG$ as a weight; this can be seen any number of ways, e.g. as a consequence of \cite[\S 21.3, Proposition]{hmph_1980}, or of \cite[Theorem 14.18]{fh_rep-th} after noting that for dominant weights $\lambda$, regarded as elements of $\ft^*:=Lie(\bT)^*$ the convex hull
    \begin{equation*}
      \mathrm{conv}
      \left\{w\lambda\ :\ w\in \text{ Weyl group}\right\}
    \end{equation*}
    contains the origin.
  \end{enumerate}
\end{proof}

The qualifications on the semisimple factor in \Cref{pr:prod.protor.ss} turn out not to be necessary.

\begin{proposition}\label{pr:prod.protor.ss.better}
  Products of pro-tori and semisimple compact connected groups select eigenvalues.
\end{proposition}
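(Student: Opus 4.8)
Here is how I would approach \Cref{pr:prod.protor.ss.better}.

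The plan is to peel off all the easy structure, reduce to a single irreducible representation of a semisimple compact connected \emph{Lie} group, and there write down a selector built from one weight. First, a continuous selector for an irreducible summand of $\rho$ is automatically a continuous $\rho$-selector (its values land in a subset of $\spc\rho(g)$), so it suffices to treat $\rho\in\Irr(\bG)$. Write $\bG=\bA\times S$ with $\bA$ a pro-torus and $S$ semisimple compact connected. Then $\rho$ is an external tensor product $\chi\boxtimes\sigma$ with $\chi\in\widehat\bA$ and $\sigma\in\Irr(S)$, and $(a,s)\mapsto\chi(a)\psi(s)$ is a continuous $(\chi\boxtimes\sigma)$-selector whenever $\psi$ is a continuous $\sigma$-selector; so it is enough to give each $\sigma\in\Irr(S)$ a continuous selector, for $S$ semisimple compact connected. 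Each such $\sigma$ factors through a Lie quotient $S/\bK$, which is again semisimple (it is its own commutator, $S$ being semisimple), and a selector there pulls back to $S$, so we may assume $S$ is a semisimple compact connected Lie group.

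Now write $S=\widetilde S/Z$ with $\widetilde S=\prod_j\widetilde{S_j}$ the (finite) product of simply-connected simple compact Lie groups covering $S$ and $Z\le Z(\widetilde S)$ finite. By \Cref{pr:const.on.conj.cls} a continuous $\sigma$-selector is a continuous function on the conjugacy-class space $S/\Ad_S$, which — exactly as in the proof of \Cref{pr:prod.protor.ss}\Cref{item:pr:prod.protor.ss:sc} — is $\bT_{\widetilde S}/W\big/Z=A/Z$, where $A$ (the product of closed Weyl alcoves, the image of a continuous section of $\bT_{\widetilde S}\twoheadrightarrow\bT_{\widetilde S}/W$) is identified with $\bT_{\widetilde S}/W$ inside $\bT_{\widetilde S}$, and $Z\le Z(\widetilde S)=P^\vee/Q^\vee$ acts on $A\subseteq\ft$ affinely, $z\cdot a=u_z a+\tau_z$ with linear part $u_z\in W$ and $\tau_z\in P^\vee$. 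The barycenter of $A$ is fixed by this affine action, so $A/Z$ is contractible, but what we really use is that a continuous $\sigma$-selector is the same as a continuous $Z$-invariant $g\colon A\to\bS^1$ with $g(a)$ in the eigenvalue set $E(a):=\{e^{2\pi i\langle\mu,a\rangle}:\mu\text{ a weight of }\sigma\}$. Since $\sigma$ is an honest $S$-representation, every weight $\mu$ is trivial on $Z$; hence $e^{2\pi i\langle\mu,z\cdot a\rangle}=e^{2\pi i\langle u_z^{-1}\mu,a\rangle}$ and $\langle\mu,\tau_z\rangle\in\bZ$ (as $\mu$ pairs integrally with $\widehat z$ and with $Q^\vee$), and by $W$-stability of the weight set $E(z\cdot a)=E(a)$: the constraint on $g$ is $Z$-equivariant.

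The step I expect to be the real obstacle is the following claim: \emph{$\sigma$ has a weight $\mu$ lying in a one-dimensional $\bar Z$-submodule of $\ft^*$}, where $\bar Z=\{u_z:z\in Z\}\le W$; equivalently $u_z\mu\in\{\mu,-\mu\}$ for all $z\in Z$ (the choice $\mu=0$ is the adjoint-type situation of \Cref{pr:prod.protor.ss}, and $Z=\{1\}$ recovers that proposition verbatim). Granting this, set $g(a):=e^{2\pi i\langle\mu,a\rangle}$ when $\bar Z$ fixes $\mu$, and otherwise — when $-\mu$ is then also a weight — $g(a):=e^{2\pi i\,|\langle\mu,a\rangle\bmod\bZ|}$, the point of $\{e^{\pm2\pi i\langle\mu,a\rangle}\}$ nearest $1$. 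In either case $g$ is continuous, takes values in $E(a)$, and is $Z$-invariant, because $\langle\mu,z\cdot a\rangle\equiv\pm\langle\mu,a\rangle\pmod\bZ$ and $x\mapsto|x\bmod\bZ|$ is negation-invariant. Extending $g$ $W$-invariantly to $\bT_{\widetilde S}$ through the continuous retraction $\bT_{\widetilde S}\to\bT_{\widetilde S}/W=A$ yields a continuous $W$-invariant map which descends to $\bT_S$ and then to $\bT_S/W_S\cong S/\Ad_S$, the desired selector. Proving the boxed claim is where the content lies: one must exploit that $\sigma$ descends to $S=\widetilde S/Z$ — i.e. that its highest weight is orthogonal to $Z$ — to force such a weight to occur; the natural route is a trace computation over suitable lifts of $\bar Z$ into $N_{\widetilde S}(\bT)$, showing a $\bar Z$-invariant weight space is nonzero (one can check the claim directly for, say, $S=SO(4)$, $SO(6)$ and $SU(6)/\bZ_3$, which already exhibit the self-dual, non-self-dual, and odd-order-center phenomena).
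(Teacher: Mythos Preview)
Your reductions to an irreducible $\sigma$ of a semisimple compact connected Lie group $S$ are correct and match the paper's. The genuine gap is the ``boxed claim'' itself: that $\sigma$ always has a weight $\mu$ with $u_z\mu\in\{\mu,-\mu\}$ for every $z\in Z$. You flag this as ``where the content lies'' and offer only a vague trace computation over lifts of $\bar Z$ into $N_{\widetilde S}(\bT)$. That sketch is not obviously completable: such lifts need not form a subgroup, and even when one can arrange this, the resulting character values do not straightforwardly detect a $\bar Z$-stable line in the weight set. The three examples you cite are all small enough that $0$ or an easily spotted fixed weight does the job, so they do not illuminate a general mechanism. As written, the argument is incomplete precisely at its crux.

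The paper avoids the issue entirely with a one-line observation you have missed. After the same reduction to semisimple Lie $S$ and $\rho\in\Irr(S)$ on a Hilbert space $V$, note that $S=S'$ forces $\rho(S)\subseteq \SU(V)$ (the composite $S\to \U(V)\to \U(V)/\SU(V)\cong\bS^1$ is a map from a perfect group to an abelian one). Since $\SU(V)$ is simply-connected, \Cref{pr:prod.protor.ss} already supplies a continuous selector $\varphi$ for its defining representation; the composite $\varphi\circ\rho$ is the desired selector for $\rho$. No alcove geometry, no $Z$-action on fundamental domains, no weight-existence claim is needed.
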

\begin{proof}
  Another application of \Cref{cor:prod.sel}\Cref{item:cor:prod.sel:prod} disposes of the pro-torus factor, so fix
  \begin{equation*}
    \left(
      \bG
      \xrightarrow{\quad\rho\quad}
      \U(V)
    \right)
    \in\Irr(\bG)
    ,\quad
    V\text{ a finite-dimensional Hilbert space}
  \end{equation*}
  for semisimple (compact connected) $\bG$, which may as well be assumed Lie (for $\rho$ factors through a Lie quotient). Now simply observe that $\rho$ must in fact factor through $\SU(V)\subset \U(V)$ because $\bG$ is semisimple, so we can restrict an eigenvalue selector for the defining representation of $\SU(V)$ on $V$ (one exists by \Cref{pr:prod.protor.ss}) along $\bG\xrightarrow{\rho}\SU(V)$. 
\end{proof}

\Cref{pr:irr.ann.z0g} is a partial converse to \Cref{pr:prod.protor.ss.better}.

\begin{proposition}\label{pr:irr.ann.z0g}
  Let $\bG$ be a compact connected group and $\rho\in \Rep(\bG)$ an finite-spectrum representation, isotypic on $Z_0:=Z_0(\bG)$. 

  If $\rho$ selects eigenvalues then it annihilates $Z_0\cap \bG'$.
\end{proposition}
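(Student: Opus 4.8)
The plan is to extract, from any continuous $\rho$-selector $\varphi$, a continuous \emph{additive} ``logarithm'' of the central character of $\rho$ along the derived subgroup $\bG'$, and then to observe that this logarithm, being a homomorphism out of a compact group into $\bR$, must vanish identically on the finite overlap $Z_0\cap\bG'$ --- which is exactly the assertion that $\rho$ is trivial there. Since $\rho$ is isotypic on $Z_0$, it restricts to $Z_0$ as $\zeta^{\oplus\dim V}$ for a single character $\zeta\in\widehat{Z_0}$, so $\rho(z)=\zeta(z)\id_V$ for $z\in Z_0$; in particular $\rho(w)=\zeta(w)\id_V$ for $w\in Z_0\cap\bG'$, and the target statement is equivalent to $\zeta|_{Z_0\cap\bG'}=1$.

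First I would reduce to the case that $\bG$ is a Lie group. As $\rho$ has finite spectrum it is finite-dimensional, hence factors through $\bH:=\bG/\bN$ with $\bN:=(\ker\rho)_0$ a closed connected normal subgroup; the compact connected group $\bH$ is in fact Lie, because $\bH\twoheadrightarrow\bG/\ker\rho\subseteq\U(V)$ has totally disconnected central kernel (the argument of \Cref{cor:fact.lie}, via \cite[Lemma 4.4]{iw}), and by \Cref{cor:qout.conn.ker} every continuous $\rho$-selector already factors through $\bH$. Writing $q\colon\bG\to\bH$, one has $\bH=q(Z_0)\cdot\bH'$ by the structure theorem \cite[Theorem 9.24]{hm5}; as $\bH'$ is semisimple with finite center, the connected subgroup $q(Z_0)$ has finite index in $Z(\bH)$ and hence equals $Z_0(\bH)$. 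Thus $\rho$ remains finite-spectrum and isotypic on the connected center over $\bH$, while $q(Z_0\cap\bG')\subseteq Z_0(\bH)\cap\bH'$, so the statement for $\bH$ implies the one for $\bG$. From now on $\bG$ is a compact connected Lie group.

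Fix a continuous $\rho$-selector $\varphi\colon\bG\to\bS^1$. Since $\sigma_p(\rho(z))=\{\zeta(z)\}$ for $z\in Z_0$ we get $\varphi|_{Z_0}=\zeta$. Using $\bG=Z_0\cdot\bG'$ \cite[Theorem 9.24]{hm5}, for each fixed $g'\in\bG'$ the map $z\mapsto\varphi(zg')/\zeta(z)$ is continuous on the connected group $Z_0$ and takes values in the finite set $\zeta(z)^{-1}\sigma_p(\rho(zg'))=\sigma_p(\rho(g'))$, hence is constant, equal to $\varphi(g')$; so $\varphi(zg')=\zeta(z)\varphi(g')$ for all $z\in Z_0$, $g'\in\bG'$. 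Restricting to $w\in\Gamma:=Z_0\cap\bG'$ (finite, central in $\bG'$) and using $wg'\in\bG'$, this reads $h(wg')=\gamma(w)h(g')$ with $h:=\varphi|_{\bG'}$ and $\gamma:=\zeta|_\Gamma$. Now $\bG'$, being semisimple compact connected Lie, has finite fundamental group, so $H^1(\bG';\bZ)=0$ and $h$ lifts to a continuous $f\colon\bG'\to\bR$ with $h=\exp(2\pi i f)$. For each $w\in\Gamma$ the function $g'\mapsto f(wg')-f(g')$ is continuous on the connected $\bG'$ and has constant exponential $\gamma(w)$, hence is itself constant, say $=s_w$, with $\exp(2\pi i s_w)=\gamma(w)$; comparing $f(w_1w_2g')$ two ways yields $s_{w_1w_2}=s_{w_1}+s_{w_2}$, so $w\mapsto s_w$ is a homomorphism from the finite group $\Gamma$ into $(\bR,+)$ and therefore vanishes. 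Hence $\gamma(w)=\exp(2\pi i s_w)=1$ for all $w\in\Gamma$, i.e.\ $\zeta|_{Z_0\cap\bG'}=1$, as wanted.

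I expect the last step --- the linearization --- to be the real content. Everything before it is bookkeeping: the selector identity $\varphi(zg')=\zeta(z)\varphi(g')$ comes for free from connectedness of $Z_0$ and finite-dimensionality, and the Lie reduction is routine (its only nonobvious ingredient being that $Z_0$ surjects onto the connected center of every Lie quotient through which $\rho$ factors). The crux is that $h=\varphi|_{\bG'}$, though a priori an arbitrary continuous circle-valued map, is forced by the near-simple-connectedness of the semisimple factor to admit a \emph{global} logarithm $f$, after which the selector identity pins $f$ down on $Z_0\cap\bG'$ to be additive, hence zero. One could alternatively skip the reduction to Lie groups and run the same argument verbatim in general, using that $\check H^1(\bG';\bZ)=0$ for every semisimple compact connected $\bG'$ (write it as an inverse limit of semisimple compact connected Lie groups and pass to the colimit in $\check H^1$), together with the fact that a continuous homomorphism from the compact group $Z_0\cap\bG'$ into $\bR$ is trivial.
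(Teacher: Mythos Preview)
Your proof is correct and runs parallel to the paper's up through the selector identity $\varphi(zg')=\zeta(z)\varphi(g')$ (the paper's \Cref{eq:phi.mod.prop}), including the Lie reduction and the clopen-partition/connectedness argument. The divergence is in the endgame: the paper, after arranging $\zeta$ faithful on $\Gamma=\langle x\rangle$, observes that $\varphi|_{\bG'}$ is $\Gamma$-equivariant for \emph{free} actions on source and target and invokes Dold's Borsuk--Ulam-type remark \cite[p.~68]{MR1247532} to conclude it cannot be nullhomotopic, contradicting the lift to $\bR$. You instead take that lift $f$ at the outset and extract from the selector identity a genuine homomorphism $w\mapsto s_w$ from the finite group $\Gamma$ into $(\bR,+)$, which must vanish. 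This is more elementary---it replaces the external equivariant-topology input by a two-line cocycle computation---and makes transparent that the obstruction is purely the torsion of $\Gamma$ against the torsion-freeness of $\bR$.

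One small slip: ``finite spectrum'' does not literally give finite-dimensionality (one can have infinitely many copies of a single irreducible). It does, however, give that $\rho$ factors through a common Lie quotient of the finitely many irreducible constituents and that each $\sigma_p(\rho(g))$ is finite, which is all your argument uses; so the gap is cosmetic. Your closing remark that one can bypass the Lie reduction via $\check H^1(\bG';\bZ)=0$ for arbitrary semisimple compact connected $\bG'$ is also correct and a nice streamlining.
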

\begin{proof}
  We will argue by contradiction that $\rho$ annihilates any one fixed but arbitrary element $x\in Z_0(\bG)\cap \bG'$. The finite-spectrum assumption implies that $\rho$ factors through a Lie quotient $\bG\xrightarrowdbl{}\overline{\bG}$; that quotient map restricts to morphisms
  \begin{equation*}
    \bG'\xrightarrowdbl[\quad\text{onto}\quad]{}\overline{\bG}'
    \quad\text{and}\quad
    Z(\bG)\xrightarrowdbl[\quad\text{onto}\quad]{}Z\left(\overline{\bG}\right)
  \end{equation*}
  by \cite[Proposition 9.26]{hm5}, so the fixed element $x$ will map into $Z_0(\overline{\bG})\cap \overline{\bG}'$. We can thus substitute $\overline{\bG}$ for $\bG$ and assume the latter Lie to begin with.
 
  By assumption, $\rho$ restricts to $Z_0:=Z_0(\bG)$ as a sum of copies of a single character $\theta\in \widehat{Z_0}$. I first claim that we have  
  \begin{equation}\label{eq:phi.mod.prop}
    \varphi(sg) = \theta(s)\varphi(g)
    ,\quad
    \forall \left(s\in Z_0,\ g\in \bG\right).
  \end{equation}
  Assume this for the moment. There is no harm in further quotienting out the kernel of $\theta|_{Z_0\cap \bG'}$, so that in particular $\theta$ is faithful on the finite cyclic group $\Gamma:=\braket{x}$. Restricted to $\bG'$, \Cref{eq:phi.mod.prop} gives a continuous map $\bG'\to \bS^1$, equivariant for two free actions by $\Gamma$: translation by $x$ on $\bG'$ and translation by $\theta(x)$ on $\bS^1$.
  
  \cite[Remark on p.68]{MR1247532} shows that such a continuous equivariant map $\bG'\xrightarrow{\varphi|_{\bG'}}\bS^1$ cannot be nullhomotopic. On the other hand, it must be: $\bG'$ (being a compact connected semisimple Lie group) has finite fundamental group \cite[Remark V.7.13]{btd_lie_1995}, the map $\varphi|_{\bG'}$ thus annihilates fundamental groups, so lifts to the contractible universal cover $\bR\xrightarrowdbl{}\bS^1$ by \cite[Lemma 79.1]{mnk}.

  It remains to prove \Cref{eq:phi.mod.prop}. To that end, fix $g\in \bG$ and note that
  \begin{equation*}
    Z_{0\mid \lambda}
    :=
    \left\{s\in Z_0\ :\ \varphi(sg)=\theta(s)\lambda\right\}
    ,\quad
    \lambda\in\sigma_p(\rho(g))
  \end{equation*}
  constitute a finite clopen partition of the connected space $Z_0$, so exactly one $Z_{0\mid \lambda}$ must be all of $Z_0$ (and the rest empty). Since on the other hand $Z_0\ni 1$ belongs to $Z_{0\mid \varphi(g)}$, the latter must be non-empty and we are done.
\end{proof}

\begin{remark}\label{re:isotyp.nec}
  The isotypic condition in \Cref{pr:irr.ann.z0g} is not redundant: \emph{every} representation $\rho$ of an arbitrary compact connected $\bG$ embeds as a summand of a representation admitting continuous eigenvalue selectors.

  To see this, assume $\rho\in \Rep(\bG)$ finite-dimensional on the carrier Hilbert space $V$ and consider the direct sum
  \begin{equation*}
    \bG
    \xrightarrow{\quad\rho\oplus \overline{\rho}\quad}
    \SU\left(V^{\oplus 2}\right)
    ,\quad
    \overline{\bullet}:=\text{complex conjugation}.
  \end{equation*}
  That direct sum selects eigenvalues over $\bG$ because (by \Cref{pr:prod.protor.ss.better}) it does over the semisimple (simply-connected \cite[Theorem 1.24(1)]{sep_cpct-lie}) compact group $\SU\left(V^{\oplus 2}\right)$. One can easily arrange for $\rho$ \emph{not} to annihilate $Z_0(\bG)\cap \bG'$.
\end{remark}

\pf{thn:sel.iff.z0g}
\begin{thn:sel.iff.z0g}
  Part \Cref{item:thn:sel.iff.z0g:sel2ann} is precisely \Cref{pr:irr.ann.z0g}, so we focus on \Cref{item:thn:sel.iff.z0g:ann2sel}. Assuming $\rho$ irreducible with $\ker\rho\ge Z_0(\bG)\cap \bG'$, we can substitute for $\bG$ its quotient \cite[Theorem 9.24(i)]{hm5}
  \begin{equation*}
    \bG/\left(Z_0(\bG)\cap \bG'\right)
    \cong
    \bA\times \bG'
    ,\quad
    \bA\text{ a pro-torus};
  \end{equation*}
  the conclusion follows from \Cref{pr:prod.protor.ss.better}.
\end{thn:sel.iff.z0g}

The continuous selectors constructed in the proof of \Cref{pr:prod.protor.ss} only employ a single weight of the representation. The following notion formalizes that type of construction. 

\begin{definition}\label{def:coh.sel}
  A continuous eigenvalue selector for a representation $\rho\in \Rep(\bG)$ of a compact connected group is \emph{coherent} if it is of the form
  \begin{equation}\label{eq:coh.sel}
    \bG\ni g
    \xmapsto{\quad}
    \chi(g)
    \in \bS^1
  \end{equation}
  for a weight $\chi\in \widehat{\bT}$ of $\rho$ with respect to a maximal torus $\bT$, \Cref{eq:coh.sel} being valid for any maximal pro-torus $\bT'\ni x$ upon identifying $\bT'$ and $\bT$ via some inner automorphism of $\bG$.

  We refer to $\chi$ as a \emph{representing weight} of the coherent selector \Cref{eq:coh.sel}. 
\end{definition}

\begin{remarks}\label{res:how.coh.makes.sense}
  \begin{enumerate}[(1),wide]
  \item The very definition of coherence relies implicitly on the mutual conjugacy of all maximal pro-tori, together with the fact that they cover $\bG$ \cite[Theorem 9.32, (i) and (ii)]{hm5} that maximal pro-tori are mutual conjugates. Note that ``the'' representing weight is (at most) defined only up to the Weyl-group action on the discrete Pontryagin dual $\widehat{\bT}$.

  \item It is not difficult to produce incoherent eigenvalue selectors, for instance for non-trivial pro-tori: all such surject onto $\bS^1$ so it is enough to handle the latter. Identifying
    \begin{equation*}
      \bZ
      \ni n
      \xmapsto[\quad\cong\quad]{}
      \chi_n:=
      \left(z\xmapsto{\quad} z^n\right)
      \in
      \widehat{\bS^1},    
    \end{equation*}
    observe that every 2-dimensional $\bS^1$-representation of the form $\chi_m\oplus \chi_n$, $|m-n|>1$ has an incoherent continuous selector: fix two distinct $(m-n)^{th}$ roots of unity $z_i\in \bS^1$, $i=1,2$ and define the selector $\varphi$ as $z^n$ and $z^m$ respectively on the two arcs connecting $z_i$.
  \end{enumerate}
\end{remarks}

We now have the following consequence of \Cref{pr:irr.ann.z0g}. 

\begin{corollary}\label{cor:coh.triv.z0}
  Let $\bT\le\bG$ be a maximal pro-torus of a compact connected group and $\rho\in \Rep(\bG)$.

  If $\chi\in\widehat{\bT}$ is a weight of $\rho$ representing a continuous $\rho$-selector $\bG\xrightarrow{\varphi}\bS^1$ then $\chi$ annihilates $Z_0(\bG)\cap \bG'$. 
\end{corollary}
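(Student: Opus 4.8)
The plan is to reduce to the irreducible case already settled in \Cref{cor:irr.sel.ann}. First I would pick an irreducible summand $\rho_0\le\rho$ whose restriction to $\bT$ contains $\chi$: such a summand exists because $\chi$ is a weight of $\rho$ and $\rho$ is a direct sum of irreducibles, so $\chi$ occurs in $(\rho_i)|_{\bT}$ for at least one irreducible constituent $\rho_i$. Concretely, there is a non-zero vector $v$ in the carrier space of $\rho_0$ with $\rho_0(t)v=\chi(t)v$ for all $t\in\bT$, hence $\chi(t)\in\sigma_p(\rho_0(t))$ for every $t\in\bT$.

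The key step is to observe that the coherent selector $\varphi$ --- which by hypothesis is a $\rho$-selector with representing weight $\chi$ --- is in fact a continuous $\rho_0$-selector. By \Cref{def:coh.sel}, $\varphi$ agrees with $\chi$ on $\bT$, and for an arbitrary $g\in\bG$ chosen inside a maximal pro-torus $\bT'$ one has $\varphi(g)=\chi\bigl(\alpha^{-1}(g)\bigr)$, where $\alpha\colon\bT\to\bT'$ is the restriction of an inner automorphism $\Ad_h$ of $\bG$. Since maximal pro-tori cover $\bG$ and are mutually conjugate \cite[Theorem 9.32, (i) and (ii)]{hm5}, this applies to every $g\in\bG$, and with $\alpha^{-1}(g)=h^{-1}gh\in\bT$ we get
\[
  \varphi(g)=\chi(h^{-1}gh)\in\sigma_p\bigl(\rho_0(h^{-1}gh)\bigr)=\sigma_p\bigl(\rho_0(g)\bigr),
\]
the last equality because $\rho_0(h^{-1}gh)$ is conjugate to $\rho_0(g)$. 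So $\varphi$ selects eigenvalues for $\rho_0$.

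To conclude, $\rho_0$ being irreducible, \Cref{cor:irr.sel.ann} gives $\ker\rho_0\supseteq Z_0(\bG)\cap\bG'$. Since $Z_0(\bG)\cap\bG'\le Z(\bG)$ and every maximal pro-torus contains $Z(\bG)$ \cite[Theorem 9.32(iv)]{hm5}, we have $Z_0(\bG)\cap\bG'\le\bT$; as $\rho_0$ is trivial on this subgroup, every $\bT$-weight of $\rho_0$ --- in particular $\chi$ --- is trivial on $Z_0(\bG)\cap\bG'$, which is the claim. I do not expect a real obstacle here: the one point needing care is the middle paragraph, namely that conjugation-invariance of $\varphi$ and of point spectra, together with the choice of an irreducible $\rho_0$ carrying the weight $\chi$, upgrades the coherent $\rho$-selector to a genuine $\rho_0$-selector --- after which the already-proved irreducible case applies directly, the finite-spectrum and $Z_0$-isotypy hypotheses being automatic for the irreducible $\rho_0$.
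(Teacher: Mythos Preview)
Your proof is correct and follows essentially the same approach as the paper: pick an irreducible summand $\rho_0\le\rho$ carrying the weight $\chi$, observe that the coherent selector $\varphi$ is then automatically a $\rho_0$-selector, and invoke the irreducible case (the paper cites \Cref{pr:irr.ann.z0g} directly rather than its repackaging as \Cref{cor:irr.sel.ann}, but this is immaterial). The paper's proof compresses the middle step---that $\varphi$ selects eigenvalues for $\rho_0$---into the single word ``substitute'', whereas you spell it out; your expanded argument is correct and the care you flag in the final paragraph is well placed but, as you suspected, presents no obstacle.
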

\begin{proof}
  Simply substitute for $\rho$ an irreducible summand thereof which still has $\chi$ as a weight and apply \Cref{pr:irr.ann.z0g}. 
\end{proof}

\section{Fibered conjugacy-class spaces}\label{se:fib.ad}

Recall (e.g. \cite[Theorem]{MR210096}, slightly paraphrased) that the space of conjugacy classes of the unitary $n\times n$ group is of the form
\begin{equation*}
  \U(n)/\Ad
  \cong
  \text{a $\bD^{n-1}$-bundle over $\bS^1$}
  \quad
  \left(\text{trivial for odd $n$, non-orientable otherwise}\right),
\end{equation*}
with $\bD^d$ denoting the $d$-dimensional disk. That result instantiates the more general compact-Lie-group phenomenon recorded in \Cref{th:conj.clss.fib}. We refer to reader to \cite[Chapter 2]{hus_fib}, \cite[Chapter 1]{hjjm_bdle}, \cite[Chapters 3 and 14]{td_alg-top} and any number of other sources cited therein for basic background on (locally trivial) \emph{bundles} (or \emph{fibrations}), and \cite[Chapter 0]{hatch_at} or \cite[Chapter 2 and Appendix A]{rs_pl_1982} (say) for instance, for brief reminders on \emph{CW complexes} \cite[\S A.7]{rs_pl_1982} and various operations pertinent to piecewise-linear topology (e.g. the \emph{joins} $X*Y$ of \cite[pp.9-10]{hatch_at}).

\begin{theorem}\label{th:conj.clss.fib}
  Let $\bG$ be a compact connected Lie group.

  \begin{enumerate}[(1),wide]
  \item\label{item:th:conj.clss.fib:gen} The space $\bG/\Ad$ of adjoint orbits of $\bG$ is a bundle over the abelianization
    \begin{equation*}
      \bG_{ab}:=\bG/\bG'\cong \bT^{\dim Z(\bG)}
    \end{equation*}
    with fiber
    \begin{equation}\label{eq:ddidsi}
      \prod_{i=1}^k \left(\bD^{d_i}*\left(\bS^{s_i}/\Gamma_i\right)\right)
      ,\quad
      \sum_i \left(d_i+s_i+1\right)=\rk(\bG')
    \end{equation}
    where
    \begin{itemize}[wide]
    \item $\rk$ denotes the \emph{rank} \cite[Definition IV.2.1]{btd_lie_1995} of a compact connected Lie group;

    \item all $d_i$ are non-negative (so that the factors in \Cref{eq:ddidsi} are all contractible);

    \item the number $k$ of factors is that of simple ideals of the semisimple Lie algebra $\fg':=Lie(\bG')$;
      
    \item and $\Gamma_i=\braket{\omega}$ are finite cyclic groups acting faithfully on spheres $\bS^{s_i}$, with
      \begin{equation*}
        \mathrm{fix}(\omega)
        :=
        \left\{x\in \bS^{s_i}\ :\ \omega(x)=x\right\}=\emptyset.
      \end{equation*}
    \end{itemize}
    
  \item\label{item:th:conj.clss.fib:bsc} Consider the decomposition
    \begin{equation*}
      \fg'=\fb\oplus \fl
      ,\quad
      \fb:=\bigoplus_{\substack{\text{simple }\fs\trianglelefteq \fg'\\\text{$\fs$ of type $B$}}}
      \fs.
    \end{equation*}
    If the connected subgroup $\bL\le \bG'$ with $Lie(\bL)=\fl$ is simply-connected, then the fiber \Cref{eq:ddidsi} is homeomorphic to a ball $\bD^{\rk(\bG')}$.    
  \end{enumerate}
\end{theorem}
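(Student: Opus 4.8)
The plan is to realize $\bG/\Ad$ as $\bT/W_\bG$ for a maximal torus $\bT\le\bG$ with Weyl group $W_\bG$ (via \cite[Proposition IV.2.6]{btd_lie_1995}, as in the proof of \Cref{pr:prod.protor.ss}), to fibre this over $\bG_{ab}$ using the near-product decomposition $\bG=Z_0(\bG)\cdot\bG'$, and then to analyze the fibre through the combinatorics of the affine Weyl group. For the bundle of \Cref{item:th:conj.clss.fib:gen}: recall \cite[Theorem 9.24]{hm5} that $\bG=Z_0(\bG)\cdot\bG'$ with finite intersection, so a maximal torus splits as $\bT=Z_0(\bG)\cdot\bT'$ with $\bT':=\bT\cap\bG'$ maximal in the semisimple $\bG'$; the Weyl group $W_\bG$ fixes $Z_0(\bG)$ pointwise (inner automorphisms act trivially on the centre) and restricts on $\bT'$ to $W':=W_{\bG'}$. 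The composite $\bT\hookrightarrow\bG\to\bG_{ab}=\bG/\bG'$ is the quotient $\bT\to\bT/\bT'$, a surjection of tori with kernel $\bT'$; since a subtorus is a direct factor this is a locally trivial principal $\bT'$-bundle, and — the one point to check — it admits local sections valued in the subtorus $Z_0(\bG)\subseteq\bT$, because $Z_0(\bG)\to\bG_{ab}$ is a finite covering. Along such a section $W_\bG$ acts trivially, so in the induced local trivializations the $W_\bG$-action on $\bT$ over $\bG_{ab}$ is the constant family $(\bT',W')$, and passing to quotients exhibits $\bT/W_\bG\to\bG_{ab}$ as a fibre bundle with fibre $\bT'/W'$. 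Finally $\bG_{ab}$, being the abelianization of a compact connected group, is a torus of dimension $\dim\bG-\dim\bG'=\dim Z(\bG)$ (as $\fg=\fz(\fg)\oplus\fg'$); everything is thus reduced to the fibre $\bT'/W'$ of a semisimple group.

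To describe that fibre I would invoke the standard description of fundamental domains for affine Weyl groups. Writing $\bT'=\ft'/L$ for the integral lattice $Q^\vee\subseteq L\subseteq P^\vee$ (coroot and coweight lattices of $\fg'$), the affine Weyl group $W'\ltimes Q^\vee$ has the closed alcove $\ol A\subseteq\ft'$ — a simplex of dimension $\rk(\bG')$ — as a fundamental domain \cite[\S 4.8, Theorem]{hmph_cox}, whence
\begin{equation*}
  \bT'/W'\;=\;\ft'/(W'\ltimes L)\;=\;\ol A/\Gamma,\qquad \Gamma:=L/Q^\vee\cong\pi_1(\bG'),
\end{equation*}
with $\Gamma$ acting on $\ol A$ via the identification of its vertices with the nodes of the affine Dynkin diagram and the action of $P^\vee/Q^\vee$ by extended-diagram symmetries. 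Since $\fg'=\bigoplus_{i=1}^k\fs_i$ with $\fs_i$ simple, $\ol A=\prod_i\ol A_i$ is a product of the simple alcoves, $\sum_i\rk(\fs_i)=\rk(\bG')$, and $\Gamma$ sits inside $\prod_i(P_i^\vee/Q_i^\vee)$, acting factorwise.

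The substantial — and, I expect, hardest — part is then identifying $(\prod_i\ol A_i)/\Gamma$ up to homeomorphism with $\prod_i\bigl(\bD^{d_i}*(\bS^{s_i}/\Gamma_i)\bigr)$, one factor per simple ideal, with the stated numerics. The tools are: (a) a type-by-type description of the extended-diagram action of $P^\vee/Q^\vee$ — in each simple type the orbit structure on affine nodes is explicit, and, crucially for \Cref{item:th:conj.clss.fib:bsc}, in type $B_n$ the nontrivial element merely transposes two nodes; (b) elementary join manipulations — $(X*Y)/H\cong(X/H)*Y$ when $H$ fixes $Y$ pointwise, $\bD^a*\bD^b\cong\bD^{a+b+1}$, the cone presentation $\Delta(U)\cong\partial\Delta(U)*\{*\}$ with barycentre fixed, and the observation that a cyclic group cyclically permuting the vertices of a simplex boundary $\partial\Delta(U)$ does so with fixed-point-free generator (this produces the $\Gamma_i$ with $\mathrm{fix}(\omega)=\emptyset$); applying these to the splitting of each $\ol A_i$ into its $\Gamma$-fixed-vertex part (a sub-simplex, contributing a disk join factor $\bD^{d_i}$) and the complementary vertex-orbit part (contributing a quotient $\partial(\text{simplex})/\Gamma_i\cong\bS^{s_i}/\Gamma_i$) yields the claimed form factor by factor; (c) a bookkeeping device to cope with the fact that $\Gamma=\pi_1(\bG')$ need not split as a product over the simple ideals — the point being that it is the homeomorphism type, not any canonical factorization, that is asserted, so the various pieces of $\Gamma$ can be reassigned among the factors (for instance a $\pi$-rotation of a ball about a codimension-two subball still yields a ball).

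For \Cref{item:th:conj.clss.fib:bsc} the last step is to feed the hypothesis into (a)–(c). One checks lattice-theoretically that simple-connectedness of $\bL$ forces $\Gamma$ to act on each non-type-$B$ simple alcove through vertex-permuting \emph{involutions} and on each type-$B$ alcove $\Delta^{n_i}$ (at most) through a transposition of two vertices, so that each simple factor contributes a quotient of a simplex by such an involution; via $\Delta^{n_i}\cong\Delta^{n_i-2}*\Delta^1$ with the involution confined to the $\Delta^1$ — giving $\bD^{n_i-2}*\bD^1\cong\bD^{n_i}$ — this factor is a ball, and the same persists through products and diagonal $\Gamma$ since the relevant fixed loci are codimension-one or codimension-two subballs with quotients that stay balls. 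Hence the fibre is a product of balls, i.e. $\bD^{\rk(\bG')}$. I anticipate the main obstacle throughout to be exactly the combinatorial/topological analysis in step (a)–(c): pinning down the extended-diagram actions, extracting the precise join decompositions, and handling the edge cases where $\pi_1(\bG')$ is ``diagonal'' across simple factors.
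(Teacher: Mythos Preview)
Your strategy is essentially the paper's: identify $\bG/\Ad$ with $\bT/W_\bG$, fibre over $\bG_{ab}$ to reduce to the semisimple fibre $\bT'/W'=\ol A/\Gamma$ with $\Gamma=\pi_1(\bG')$, and then argue factor-by-factor through the extended-Dynkin action on $\ol A=\prod_i\ol A_i$. The only methodological difference is in the reduction to simple factors: the paper invokes an iterated ``fibration principle'' (asserting that $\bT_\bG/W_\bG\to\bT_{\bG/\bS_i}/W_{\bG/\bS_i}$ is a locally trivial $\bT_{\bS_i}/W_{\bS_i}$-bundle, hence splits over a contractible base), while you work with the product alcove directly and flag the diagonal-$\Gamma$ issue as your item (c).

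That item (c) is a genuine gap, not mere bookkeeping, and it in fact breaks part \Cref{item:th:conj.clss.fib:bsc} as stated. Take $\bG'=(\SU(4)\times\mathrm{Spin}(5))/\Gamma$ with $\Gamma=\braket{(2,1)}\le\bZ/4\times\bZ/2$. Here $\fl$ is the type-$A_3$ ideal, $\fb$ the type-$B_2$ ideal, and $\bL\cong\SU(4)$ is simply-connected (since $\Gamma\cap(\bZ/4\times\{0\})=\{0\}$), so \Cref{item:th:conj.clss.fib:bsc} predicts $\bT'/W'\cong\bD^5$. But $\bT'/W'=(\Delta^3\times\Delta^2)/\Gamma$, with the generator acting as $(v_0v_2)(v_1v_3)$ on $\Delta^3$ and as $(w_0w_1)$ on $\Delta^2$: an affine involution of $\bR^5$ whose $(-1)$-eigenspace is $3$-dimensional. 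At interior fixed points the quotient is therefore locally $\bR^2\times(\bR^3/\{\pm1\})=\bR^2\times\cC(\bR\bP^2)$, which is \emph{not} a topological manifold; hence $\bT'/W'\not\cong\bD^5$. Your argument fails at two specific places. First, while $\bL$ simply-connected does force $\Gamma\hookrightarrow(\bZ/2)^m$ (your observation is correct), the induced involution on a non-type-$B$ alcove need not be a single transposition --- for $E_7$ it is a product of three, and $\Delta^7/\braket{\omega}\cong\bD^4*\bR\bP^2$ is no ball --- so the ``involution confined to $\Delta^1$'' picture is wrong. Second, even when each per-factor quotient \emph{is} a ball (as here: $\Delta^3/\tau_3\cong\bD^3$ and $\Delta^2/\tau_2\cong\bD^2$ individually), the diagonal quotient is not their product; your ``codimension-one or -two'' assertion is exactly what fails, the combined $(-1)$-eigenspace having dimension $3$. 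The paper's fibration route shares this blind spot: over the $\tau_2$-fixed locus in $\Delta^2/\braket{\tau_2}$ the fibre of the projection is $\Delta^3/\braket{\tau_3}$ rather than $\Delta^3$, so the map is not locally trivial and the product splitting $\bT_{\bS_i}/W_{\bS_i}\times\bT_{\bG/\bS_i}/W_{\bG/\bS_i}$ does not follow.
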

\begin{proof}
  There is in any case an identification \cite[Proposition IV.2.6]{btd_lie_1995} $\bG/\Ad\cong \bT/W_{\bG}$ valid for any compact connected Lie group, $W_{\bG}:=N_{\bG}(\bT)/\bT$ denoting the Weyl group attached to a maximal torus $\bT\le \bG$, which we use take for granted throughout.

  Consider the decomposition \cite[Theorem 6.4]{hm5}
  \begin{equation*}
    \fg:=Lie(\bG)
    =
    \fz\oplus \fg'
    =
    \fz\oplus \bigoplus_i \fs_i
    ,\quad
    \fz:=\text{center $Z(\fg)$}
    ,\quad
    \fs_i\text{ simple}. 
  \end{equation*}
  Each ideal $\fl\trianglelefteq \fg$ is the Lie algebra of a unique \cite[\S III.6.2, Theorem 2]{bourb_lie_1-3} connected subgroup $\bL\le \bG$, automatically closed  (e.g. by \cite[Chapter II, Exercise D.4]{helg_dglgssp}). Picking maximal tori $\bT_{\bL}\le \bL$ and $\bT_{\bG/\bL}\le \bG/\bL$ exhibits the quotient $\bT_{\bG}/W_{\bG}$ of the unique maximal torus
  \begin{equation*}
    \bT_{\bG}\le \bG
    \quad\text{with}\quad
    \bT_{\bL}\le \bT_{\bG}
    \quad\text{and}\quad
    \bT_{\bG}\text{ surjecting onto }\bT_{\bG/\bL}
  \end{equation*}
  as a locally trivial $\bT_{\bL}/W_{\bL}$-bundle over $\bT_{\bG/\bL}/W_{\bG/\bL}$. Applying this to $\fl:=\fz$ first reduces the problem to showing that the fiber $\bT_{\bG'}/W_{\bG'}$ of the bundle
  \begin{equation*}
    \begin{tikzpicture}[>=stealth,auto,baseline=(current  bounding  box.center)]
      \path[anchor=base] 
      (0,0) node (l) {$\bT_{\bG'}/W_{\bG'}$}
      +(3,.5) node (u) {$\bT_{\bG}/W_{\bG}$}
      +(6,-.5) node (r) {$\bT_{\bG_{ab}}/W_{\bG_{ab}}$}
      ;
      \draw[right hook->] (l) to[bend left=6] node[pos=.5,auto] {$\scriptstyle $} (u);
      \draw[->>] (u) to[bend left=6] node[pos=.5,auto,swap] {$\scriptstyle $} (r);
    \end{tikzpicture}
  \end{equation*}
  is (homeomorphic to) a ball. In other words, it will suffice (by switching focus from $\bG$ to $\bG'$) to show that $\bG/\Ad$ is (homeomorphic to) a ball of dimension $\rk(\bG)$ for compact, connected \emph{semisimple} Lie groups $\bG$.

  The same fibration principle also reduces the problem to simple $\bG$, by selecting $\fl:=\fs_i$ for some $i$: locally trivial bundles over a contractible spaces (sufficiently well-behaved, e.g. \emph{CW complexes} by \cite[Theorem 1.3.5]{fp_cell}) are trivial by standard bundle-classification theory \cite[Theorems 14.4.1 and 14.4.2]{td_alg-top}. The fibration 
  \begin{equation*}
    \begin{tikzpicture}[>=stealth,auto,baseline=(current  bounding  box.center)]
      \path[anchor=base] 
      (0,0) node (l) {$\bT_{\bS_i}/W_{\bS_i}$}
      +(3,.5) node (u) {$\bT_{\bG}/W_{\bG}$}
      +(6,-.5) node (r) {$\bT_{\bG/\bS_i}/W_{\bG/\bS_i}$}
      ;
      \draw[right hook->] (l) to[bend left=6] node[pos=.5,auto] {$\scriptstyle $} (u);
      \draw[->>] (u) to[bend left=6] node[pos=.5,auto,swap] {$\scriptstyle $} (r);
    \end{tikzpicture}
  \end{equation*}
  thus splits as
  \begin{equation*}
    \bT_{\bG}/W_{\bG}
    \cong
    \bT_{\bS_i}/W_{\bS_i}
    \times
    \bT_{\bG/\bS_i}/W_{\bG/\bS_i},
  \end{equation*}
  so we can further assume (by induction on the number of simple ideals $\fs_i\trianglelefteq \fg$) $\bG$ simple throughout the rest of the proof. The goal is thus to argue that  
  \begin{equation*}
    \bG\text{ compact connected simple}
    \quad
    \xRightarrow{\quad}
    \quad
    \bG/\Ad\cong
    \bD^d*\left(\bS^s/\Gamma\right)
    ,\quad d\ge 1
  \end{equation*}
  for some free action of a finite abelian group $\Gamma$ on a sphere $\bS^s$ (by necessity, $d+s+1=\rk(\bG)$). 
  
  \begin{enumerate}[(I),wide]
  \item\label{item:th:conj.clss.fib:sc} \textbf{: Simply-connected $\bG$.} In that case, \cite[\S 4.8, Theorem]{hmph_cox} (effectively) identifies $\bT/W$ with a simplex. In particular, this settles those cases where the group $\bG$ (assumed simple) is of one of the automatically-simply-connected types \cite[table on p.30, following Lemma 28]{stein_chev}: $E_8$, $F_4$ and $G_2$. 

  \item\label{item:th:conj.clss.fib:gen} \textbf{: Generalities on arbitrary (simple) $\bG$.} Denote by $\widetilde{\bG}$ the \emph{universal cover} \cite[pp.54-55, post Theorem 12]{stein_chev} of the connected simple compact Lie group $\bG$. The sought-after space $\bG/\Ad\cong \bT/W$ is then a quotient
    \begin{equation*}
      \left(\widetilde{\bT}/W\cong \widetilde{\bG}/\Ad\right)/\pi_1(\bG)
    \end{equation*}
    of the simplex
    \begin{equation*}
      \Delta^n:=\widetilde{\bT}/W\cong \widetilde{\bG}/\Ad
      ,\quad
      n:=\rk(\bG)
    \end{equation*}
    of \Cref{item:th:conj.clss.fib:sc} above, with $\pi_1(\bG)\le \widetilde{\bG}$ embedded as a finite central subgroup. The possible fundamental groups $\pi_1(\bG)$ are the subgroups of $Z(\widetilde{\bG})$ listed in \cite[2$^{nd}$ column in table on p.30]{stein_chev}, and the action on the $\rk(\bG)$-simplex $\widetilde{\bT}/W$ is described in \cite[Table 3, pp.64-65]{garnier:tel-03622954}. 
    
  \item \textbf{: Type $B_{\ge 2}$.} In that case the only non-simply-connected variant is the adjoint group, with $\pi_1\cong \bZ/2$. The type-$B$ rows in \cite[Table 3]{garnier:tel-03622954} show that the non-trivial involution interchanges two vertices of the simplex and fixes the rest; plainly, the quotient is topologically a ball. 

    The argument thus far settles \Cref{item:th:conj.clss.fib:gen} partially and \Cref{item:th:conj.clss.fib:bsc} completely.

  \item\label{item:th:conj.clss.fib:cycl} \textbf{: Cyclic $\pi_1(\bG)$.} The group $\pi_1:=\pi_1(\bG)$ is cyclic generated by $\omega$ operating on the $n+1$ vertices of $\Delta^n$ as a product
    \begin{equation*}
      w=\sigma_1\cdots \sigma_k
      ,\quad
      1\le k\le n
      ,\quad
      \sigma_i\text{ disjoint cycles of respective lengths }1\le \ell_i\le n+1.
    \end{equation*}
    Each $\sigma_i$ respectively cycles through the $k$ vertices of a $(k-1)$-simplex $\Delta_i$. Set
    \begin{equation*}
      \begin{aligned}
        \bD^{k-1}
        &:=
          \Delta^n\cap L^{k-1}
        \text{ with}\\
        L^{k-1}
        &:=
          \aspn \left\{\text{\emph{barycenter} \cite[\S 2]{zbMATH01714503} of }\Delta_i\ :\ 1\le i\le k\right\}\\
        \aspn
        &:=
          \text{\emph{affine span} \cite[Definition pre Theorem 16.4]{rom_lalg_3e_2008}}.
      \end{aligned}      
    \end{equation*}
    The affine automorphism $\omega$ of $\aspn \Delta^n$ fixes $L^{k-1}$ pointwise and operates fixed-point-freely on the \emph{link} \cite[\S 2, p.222]{zbMATH01714503}
    \begin{equation*}
      \lk_{\Delta^n}\left(\bD^{k-1}\right)
      \cong
      \bS^{n-k}
    \end{equation*}
    of $\bD^{k-1}$ in $\Delta^n$. We thus have
    \begin{equation}\label{eq:join.ball.lens}
      \Delta^n/\pi_1
      \cong
      \bD^{k-1}*\left(\lk_{\Delta^n}\left(\bD^{k-1}\right)/\braket{\omega}\right)
      \cong
      \bD^{k-1}*\left(\bS^{n-k}/\braket{\omega}\right),
    \end{equation}
    as desired. 

    In addition to the previously-covered cases, this now also handles
    \begin{itemize}[wide]
    \item all type-$A$ groups (with $|\pi_1|$ ranging through the divisors of $n+1$ for type $A_n$);
    \item all types $C$ with $|\pi_1|=2$, so that the sphere quotients in \Cref{eq:join.ball.lens} are real projective spaces $\bR\bP^{n-k}$;

    \item intermediate (i.e. non-adjoint, non-simply-connected) types $D_{\text{even}}$ with $|\pi_1|=2$ again;

    \item adjoint types $D_{\text{odd}}$ with $|\pi_1|=4$;

    \item and types $E_6$ and $E_7$, with $|\pi_1|=3,2$ respectively. 
    \end{itemize}

  \item\label{item:th:conj.clss.fib:adjd} \textbf{: Adjoint type $D_{\text{even}\ge 4}$.} The fundamental group is $(\bZ/2)^2$, with the non-trivial elements operating on the vertex set of $\Delta^n$ as
      \begin{equation}\label{eq:d2n.omegas}
        \omega_1=\sigma_1
        \quad\text{and}\quad
        \omega_{2,3}=\sigma_{2,3}\cdot\prod_{i=1}^{\frac{n-4}2}\tau_i
        \quad\text{respectively},
      \end{equation}
      with $\sigma_{1,2,3}$ denoting the affine involutions of a simplex $\Delta^3\subset \Delta^n$ fixing no vertices of $\Delta^3$ (so each is a product of two disjoint transpositions) and $\tau_i$ denoting mutually-disjoint transpositions of pairs of vertices of the complementary $\Delta^{n-4}\subset \Delta^n$ (i.e. the link \cite[\S 2, p.222]{zbMATH01714503} on $\Delta^3\subset \Delta^n$).

      Observe first that the quotient $\Delta^n/\braket{\omega_1}$ is an $n$-ball: because $\omega_1$ acts trivially on $\Delta^{n-4}$ and as reflection across a \emph{bimedian} (segment connecting two opposite edges) of $\Delta^3$, we have
    \begin{equation*}
      \Delta^n/\braket{\omega_1}
      \cong
      \left(\Delta^3*\Delta^{n-4}\right)/\braket{\omega_1}
      \cong
      \left(\Delta^3/\braket{\omega_1}\right)*\Delta^{n-4}
      \cong
      \bD^3*\Delta^{n-4}
      \cong
      \bD^n.
    \end{equation*}
    Quotienting further by $\braket{\omega_2}$ effectively reduces us to \Cref{item:th:conj.clss.fib:cycl} above.    
  \end{enumerate}
\end{proof}

\begin{remark}\label{re:whycontractible}
  The contractibility of the individual factors in \Cref{eq:ddidsi} is in any case immediate (regardless of any other structure results) once we identify the simplex $\Delta:=\widetilde{\bG}/\Ad$ with the \emph{cone} \cite[p.225]{zbMATH01714503}
  \begin{equation*}
    \cC \partial\Delta
    :=
    \{\mathrm{pt}\}*\partial \Delta
    ,\quad
    \partial\Delta:=\text{\emph{boundary} \cite[p.222]{zbMATH01714503} of $\Delta$}
  \end{equation*}
  with the barycenter of $\Delta$ as the tip and observe that the $\pi_1(\bG)$-action fixes that tip, so that $\Delta/\pi_1$ is the cone $\cC\left(\partial\Delta/\pi_1\right)$. 
\end{remark}

It is certainly not the case, in the context of \Cref{th:conj.clss.fib}, that the fibers \Cref{eq:ddidsi} are always homeomorphic to balls: step \Cref{item:th:conj.clss.fib:cycl} of the above proof, for instance, gives enough information to determine $\bG/\Ad$ up to homeomorphism in adjoint types $E$.

\begin{lemma}\label{le:join.lens}
  For compact adjoint Lie groups $\bG$ of types $E_{6,7}$ the spaces $\bG/\Ad$ are
  \begin{equation*}
    E_7
    \ :\
    \bG/\Ad\cong \bD^4*\bR\bP^2
    \quad\text{and}\quad
    E_6
    \ :\ 
    \bG/\Ad\cong \bD^2*\bL^3,
  \end{equation*}
  with $\bL^3$ denoting one of the 3-dimensional \emph{lens spaces} $L_3$ of \cite[Example 2.43]{hatch_at}.
\end{lemma}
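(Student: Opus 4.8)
The plan is to specialize the analysis of step \Cref{item:th:conj.clss.fib:cycl} of the proof of \Cref{th:conj.clss.fib} to the two exceptional types in question, identifying the relevant cyclic group $\pi_1(\bG)$, the cycle decomposition of a generator $\omega$ acting on the vertices of the Coxeter simplex $\Delta^n$ ($n=\rk(\bG)$), and reading off the resulting join decomposition $\bD^{k-1}*\left(\bS^{n-k}/\braket{\omega}\right)$ from \Cref{eq:join.ball.lens}. For $E_7$ we have $n=7$ and $\pi_1(\bG)\cong\bZ/2$; for $E_6$ we have $n=6$ and $\pi_1(\bG)\cong\bZ/3$. So the concrete task is: locate, in \cite[Table 3]{garnier:tel-03622954}, the permutation $\omega$ of the $n+1$ extended-Dynkin-diagram nodes induced by a generator of the center of $\widetilde{\bG}$, decompose it into disjoint cycles, count the number $k$ of cycles (equivalently the dimension of the fixed affine subspace plus one), and conclude.

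First I would treat $E_7$. The generator $\omega$ of $\bZ/2\le Z(\widetilde{E_7})$ acts on the $8$ nodes of the affine $E_7$ diagram as an involution; by the symmetry of that diagram it fixes two nodes and swaps the remaining six in pairs, so its cycle type is $1^2 2^3$, giving $k=5$ cycles. Then \Cref{eq:join.ball.lens} yields $\bD^{k-1}*\left(\bS^{n-k}/\braket{\omega}\right)=\bD^{4}*\left(\bS^{2}/\braket{\omega}\right)$, and since $\omega$ acts on that $\bS^2=\lk_{\Delta^7}(\bD^4)$ freely as an order-two map, the quotient is $\bR\bP^2$. This gives $\bG/\Ad\cong\bD^4*\bR\bP^2$. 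I should double-check the count $k=5$ against the constraint $d+s+1=\rk(\bG)$ from \Cref{th:conj.clss.fib}: here $d=k-1=4$, $s=n-k=2$, and indeed $4+2+1=7=\rk(E_7)$, so the bookkeeping is consistent.

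Next, $E_6$: the generator $\omega$ of $\bZ/3\le Z(\widetilde{E_6})$ acts on the $7$ nodes of the affine $E_6$ diagram as an order-three rotation. That diagram has a visible $3$-fold symmetry permuting the three ``legs''; $\omega$ fixes the central node and cyclically permutes the three legs, so as a permutation of the $7$ vertices it is a product of two disjoint $3$-cycles and one fixed point, cycle type $1^1 3^2$, hence $k=3$ cycles. Then \Cref{eq:join.ball.lens} gives $\bD^{2}*\left(\bS^{3}/\braket{\omega}\right)$ (check: $d=2$, $s=3$, $2+3+1=6=\rk(E_6)$). The free $\bZ/3$-action on $\bS^3=\lk_{\Delta^6}(\bD^2)$ coming from the (linearized) simplicial action is, up to equivalence, a standard orthogonal free action, whose quotient is a lens space $L(3;q)$ for some $q$ coprime to $3$; either choice of $q\in\{1,2\}$ gives homeomorphic spaces, so $\bS^3/\braket{\omega}\cong \bL^3$ in the notation of \cite[Example 2.43]{hatch_at}, and $\bG/\Ad\cong \bD^2*\bL^3$.

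The main obstacle is verifying that the induced free $\bZ/3$-action on the linking $3$-sphere in the $E_6$ case really is (conjugate to) a linear lens-space action rather than some exotic free action — this is what legitimizes calling the quotient a lens space. The cleanest route is to note that $\omega$ acts as an affine automorphism of the ambient affine space $\aspn\Delta^6$ fixing the $2$-plane $L^{2}$ pointwise and acting linearly (and orthogonally, after choosing a Weyl-invariant inner product) on the complementary $4$-dimensional subspace with no nonzero fixed vector; restricting this orthogonal order-$3$ transformation to the unit sphere of that complement gives a free linear action, and the link $\lk_{\Delta^6}(\bD^2)\cong\bS^3$ is $\omega$-equivariantly identified with a sphere carrying exactly this linear action. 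Hence the quotient is a genuine linear lens space. The identification of the link with $\bS^{n-k}$ and the splitting \Cref{eq:join.ball.lens} are already established in the proof of \Cref{th:conj.clss.fib}, so nothing new is needed there; the only genuinely type-specific input is the two cycle-type computations, which can be extracted directly from \cite[Table 3]{garnier:tel-03622954}.
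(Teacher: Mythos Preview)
Your proposal is correct and follows essentially the same route as the paper: both specialize step \Cref{item:th:conj.clss.fib:cycl} of the proof of \Cref{th:conj.clss.fib}, read the cycle types ($1^2 2^3$ for $E_7$, $1^1 3^2$ for $E_6$) off \cite[Table 3]{garnier:tel-03622954}, and obtain the join decomposition \Cref{eq:join.ball.lens}. The only cosmetic difference is in the $E_6$ lens-space identification: the paper writes the link explicitly as $\bS^1*\bS^1$ with $\bZ/3$ rotating both circles simultaneously (so the quotient is visibly $L_3(1,1)$), whereas you argue more abstractly that the induced action on the complementary $4$-plane is orthogonal and fixed-point-free, hence a linear lens-space action; both then invoke the classification of $3$-dimensional lens spaces to conclude all $L_3(u,v)$ are homeomorphic.
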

\begin{proof}
  As mentioned, this is a specialization of the discussion in step \Cref{item:th:conj.clss.fib:cycl} of the proof of \Cref{th:conj.clss.fib}. The $E_{6,7}$ rows of \cite[Table 3]{garnier:tel-03622954} describe the action of $\pi_1(\bG)$ on the simplex $\Delta:=\widetilde{\bG}/\Ad$ (where $\widetilde{\bG}\xrightarrowdbl{}\bG$ denotes the universal cover, as in step \Cref{item:th:conj.clss.fib:gen} of the above proof):
  \begin{equation*}
    E_7
    \ :\
    \text{product $\omega_a\omega_b\omega_c$ of three 2-cycles}
    \quad\text{and}\quad
    E_6
    \ :\ 
    \text{product $\omega_s\omega_t$ of two 3-cycles},
  \end{equation*}
  each permuting 6 of the vertices of the $\ell$-simplex $\Delta^{\ell}$, $\ell=7,6$ respectively. In each case the fixed-point space is the convex hull of the fixed vertices of $\Delta^{\ell}$ and the barycenters of the faces left invariant by the cycles $\omega_i$, so they are disks $\bD^4$ and $\bD^2$ for $\ell=7,6$ respectively. The links on those disks in $\Delta^\ell$ are
  \begin{itemize}[wide]
  \item for $\ell=7$, a 2-sphere acted on antipodally;

  \item and for $\ell=6$, a 3-sphere $\bS^3\cong \bS^1*\bS^1$ \cite[Proposition 2.23]{rs_pl_1982} acted upon by $\bZ/3$ via the latter's simultaneous rotation of the two circles. 
  \end{itemize}
  This settles $E_7$, while for $E_6$ note that the action just described is nothing but
  \begin{equation*}
    \bS^3\cong\left\{(z_1,z_2)\in \bC^2\ :\ \sum|z_i|^2=1\right\}
    \xmapsto{\quad\sigma\in \bZ/3\subset \bS^1\quad}
    (\sigma z_1,\sigma z_2)
    \in
    \bS^3,
  \end{equation*}
  whose quotient is by definition the lens space $L_3(1,1)$ of \cite[Example 2.43]{hatch_at}. In this case the parameters do not matter: per the classification \cite[Theorem, p.181]{zbMATH03194320} of 3-dimensional lens spaces, all $L_3(u,v)$, $u,v\in \bZ\setminus 3\bZ$ are mutually homeomorphic. 
\end{proof}

In the same spirit, we have the following description in for two other types not analyzed fully in the proof of \Cref{th:conj.clss.fib}. 

\begin{lemma}\label{le:array.along.lattice}
  For simple, compact Lie groups of types $C$ and $D$ the spaces $\bG/\Ad$ are homeomorphic to the following spaces, arrayed along the lattice of subgroups
  \begin{equation*}
    \pi_1(\bG)
    \le
    \pi_1\left(\Ad \bG\right),
  \end{equation*}
  with higher nodes corresponding to larger subgroups. 

  \begin{equation*}
    C_{n\ge 3}
    \ :\ 
    \begin{tikzpicture}[>=stealth,auto,baseline=(current  bounding  box.center)]
      \path[anchor=base] 
      (0,0) node (u) {$\bD^{(n-1)-\left\lfloor\frac{n-1}{2}\right\rfloor}*\bR\bP^{\left\lfloor\frac{n-1}{2}\right\rfloor}$}
      +(0,-1) node (d) {$\bD^n$}
      ;
      \draw[-] (u) to[bend left=0] node[pos=.5,auto] {$\scriptstyle $} (d);
    \end{tikzpicture}    
  \end{equation*}
  \begin{equation}\label{eq:le:array.along.lattice:d}
    D_{2n+1\ge 5}
    \ :\
    \begin{tikzpicture}[>=stealth,auto,baseline=(current  bounding  box.center)]
      \path[anchor=base] 
      (0,0) node (u) {$\bD^{n-1}*\bR\bP^{n+1}$}
      +(0,-1) node (d) {$\bD^{2n+1}$}
      +(0,-2) node (dd) {$\bD^{2n+1}$}
      ;
      \draw[-] (u) to[bend left=0] node[pos=.5,auto] {$\scriptstyle $} (d);
      \draw[-] (d) to[bend left=0] node[pos=.5,auto] {$\scriptstyle $} (dd);
    \end{tikzpicture}
    \qquad
    D_{2n\ge 4}
    \ :\ 
    \begin{tikzpicture}[>=stealth,auto,baseline=(current  bounding  box.center)]
      \path[anchor=base] 
      (0,0) node (u) {$\bD^{n}*\bR\bP^{n-1}$}
      +(-2,-1) node (dl) {$\bD^{n}*\bR\bP^{n-1}$}
      +(0,-1) node (dm) {$\bD^{2n}$}
      +(2,-1) node (dr) {$\bD^{n}*\bR\bP^{n-1}$}
      +(0,-2) node (dd) {$\bD^{2n}$}
      ;
      \draw[-] (u) to[bend right=6] node[pos=.5,auto] {$\scriptstyle $} (dl);
      \draw[-] (u) to[bend right=0] node[pos=.5,auto] {$\scriptstyle $} (dm);
      \draw[-] (u) to[bend left=6] node[pos=.5,auto] {$\scriptstyle $} (dr);
      \draw[-] (dl) to[bend right=6] node[pos=.5,auto] {$\scriptstyle $} (dd);
      \draw[-] (dm) to[bend right=0] node[pos=.5,auto] {$\scriptstyle $} (dd);
      \draw[-] (dr) to[bend left=6] node[pos=.5,auto] {$\scriptstyle $} (dd);
    \end{tikzpicture}
  \end{equation}
\end{lemma}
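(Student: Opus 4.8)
The plan is to extend the alcove bookkeeping of the proof of \Cref{th:conj.clss.fib} to every isogeny form occurring in the two lattices, using throughout the identification $\bG/\Ad\cong\bT/W\cong\Delta^{\rk(\bG)}/\pi_1(\bG)$ of \cite[Proposition IV.2.6]{btd_lie_1995} and \cite[\S 4.8, Theorem]{hmph_cox}, in which $\Delta^{\rk}=\widetilde\bT/W$ is the $\rk$-simplex and $\pi_1(\bG)\le Z(\widetilde\bG)$ acts on its vertices (the affine simple roots) through automorphisms of the extended Dynkin diagram, as tabulated in \cite[Table 3]{garnier:tel-03622954}. For cyclic $\pi_1(\bG)=\langle\omega\rangle$ I would record the orbit partition of the vertex set, let $\bD^{k-1}$ be the fixed simplex of $\omega$ (the span of the $k$ orbit barycenters), and invoke \Cref{eq:join.ball.lens}, $\Delta^{\rk}/\langle\omega\rangle\cong\bD^{k-1}*(\lk/\langle\omega\rangle)$ with $\lk\cong\bS^{\rk-k}$ a join of boundaries of the orbit-simplices on which $\omega$ acts diagonally; for the single non-cyclic case, adjoint $D_{\mathrm{even}}$ with $\pi_1\cong(\bZ/2)^2$, I would instead follow step \Cref{item:th:conj.clss.fib:adjd}, quotienting first by the vector class and then treating the residual $\bZ/2$.

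The routine forms are dispatched at once. In type $C_n$ the simply-connected group gives the simplex, $\cong\bD^n$; for the adjoint form the generator of $\pi_1\cong\bZ/2$ is the end-reversing symmetry of $\widetilde C_n$, a product of $k=\lceil(n+1)/2\rceil$ orbits (a fixed vertex occurring exactly when $n$ is even), so the link is a join of the transposition-boundaries $\bS^0$ acted on antipodally, i.e. $\bS^{\rk-k}$ with the antipodal action, and \Cref{eq:join.ball.lens} yields $\bD^{k-1}*\bR\bP^{\rk-k}$; rewriting $k-1$ and $\rk-k$ via floors gives the stated $\bD^{(n-1)-\lfloor(n-1)/2\rfloor}*\bR\bP^{\lfloor(n-1)/2\rfloor}$ uniformly. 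The same argument handles the non-adjoint forms in type $D$: the simply-connected ones are simplices $\bD^{\rk}$; the $\mathrm{SO}$-quotient, and likewise the $\langle\omega^2\rangle$-quotient of $D_{2n+1}$, act by two transpositions of fork-tip nodes fixing the chain, so $k=\rk-1$, $\lk\cong\bS^1$ antipodal, and $\bD^{\rk-2}*\bR\bP^1\cong\bD^{\rk-2}*\bS^1\cong\bD^{\rk}$; each spinor (half-spin) class of $D_{2n}$ is a double transposition of the four fork-tip nodes composed with the flip of the odd-length chain, which fixes one chain node and contributes $n-2$ further transpositions, so $k=n+1$, $\lk\cong(\bS^0)^{*n}\cong\bS^{n-1}$ antipodal, and the quotient is $\bD^{n}*\bR\bP^{n-1}$.

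The substantive case is adjoint $D_{2n+1}$, $\pi_1\cong\bZ/4=\langle\omega\rangle$: here $\omega$ is a single $4$-cycle on the four fork-tip nodes together with $n-1$ transpositions on the chain, so $k=n$ and \Cref{eq:join.ball.lens} gives $\bD^{n-1}*(\bS^{n+1}/\langle\omega\rangle)$ with $\bS^{n+1}\cong\partial\Delta^3*(\bS^0)^{*(n-1)}$. The point to establish is $\bS^{n+1}/\langle\omega\rangle\cong\bR\bP^{n+1}$ in spite of the non-freeness ($\omega^2$ fixes a codimension-$2$ great sphere). I would do this via the linear model $\bS^{n+1}\subset\bR^{n}\oplus\bC$ with $\omega$ acting as $(-\mathrm{id},\ \text{multiplication by }i)$ — conjugate to the genuine action since an affine automorphism of a simplex is linearizable at its barycenter — so that $\omega^2=(\mathrm{id},-1)$ fixes $\bS^{n-1}=\bS^{n+1}\cap\bR^n$ and rotates the normal circle by $\pi$, whence $\bS^{n+1}/\langle\omega^2\rangle\cong\bS^{n-1}*(\bS^1/\pm)\cong\bS^{n-1}*\bS^1\cong\bS^{n+1}$; a short check then shows the residual $\bZ/2$ acts on this recovered sphere antipodally (antipodally on the $\bS^{n-1}$ factor, while $z\mapsto iz$ on $\bS^1$ descends to $z^2\mapsto-z^2$ on the quotient circle), so the final quotient is $\bR\bP^{n+1}$. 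Adjoint $D_{2n}$ is then closed off exactly as in step \Cref{item:th:conj.clss.fib:adjd}: quotienting by the vector class produces a ball canonically identified with $\Delta^{2n}$ and carrying a residual $\bZ/2$, to which \Cref{eq:join.ball.lens} applies and returns $\bD^{n}*\bR\bP^{n-1}$, so the adjoint node coincides with the half-spin nodes, as claimed.

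The main obstacle is the homeomorphism $\bS^{n+1}/\langle\omega\rangle\cong\bR\bP^{n+1}$ for adjoint $D_{2n+1}$: a genuine homeomorphism is needed, not merely a homology equivalence, and the non-freeness makes this the one place where a direct geometric argument — the linear model and the two-stage quotient above — is unavoidable; the apparent obstruction dissolves because $\omega^2$ is a rotation by $\pi$ about a codimension-$2$ sphere, whose quotient is again a sphere. A subsidiary point is to extract the orbit-size profiles in type $D$ without committing to which extended-diagram automorphism realizes a given class of $P/Q$ — only the profile matters — and in particular to verify that the chain-flip attached to a spinor class of $D_{2n}$ does fix a node (the chain having odd length), which is what pins down $k=n+1$ there and makes the half-spin and adjoint answers coincide.
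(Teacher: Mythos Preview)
Your proposal is correct and follows essentially the paper's approach: the same alcove/extended-diagram bookkeeping for the $\bZ/2$ cases, and the same two-stage quotient (first by $\omega^2$, then by the residual involution) for adjoint $D_{2n+1}$ and $D_{2n}$. The only organizational difference is that you apply \Cref{eq:join.ball.lens} first and then run the two-stage quotient on the link sphere via the explicit linear model $(\bR^n\oplus\bC,\ (-\mathrm{id},i\cdot))$, whereas the paper performs the two-stage quotient directly on the simplex and regroups afterwards into $\bD^{n-1}*\bS^{n+1}$; your linear-model check that the residual $\bZ/2$ acts antipodally is arguably more transparent than the paper's terse ``regroup'' step, but the two arguments are interchangeable.
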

\begin{proof}
  We reprise the notation used in the proof of \Cref{th:conj.clss.fib} (e.g. writing $\widetilde{\bG}\xrightarrowdbl{}\bG$ for the universal cover of $\bG$).
  
  By \Cref{th:conj.clss.fib}\Cref{item:th:conj.clss.fib:bsc}, there is nothing to prove in the simply-connected cases (i.e. the bottom nodes of the various diagrams). 

  The cases of $\pi_1\cong \bZ/2=\braket{\omega}$ acting on the $r$-simplex $\bD^r\cong \widetilde{\bG}/\Ad$ ($r:=\rk\bG$) as a disjoint product of $k$ transpositions is also simple: that action is an involutive affine reflection across the $(r-k)$-dimensional affine span of $\mathrm{fix}(\omega)\subset\bD^r$; the quotient is thus
  \begin{equation}\label{eq:fixo.lk.fix.o}
    \mathrm{fix}~\omega
    *
    \left(\lk_{\bD^r}\left(\mathrm{fix}~\omega\right)/\braket{\omega}\right)
    \cong
    \bD^{r-k}*\bR\bP^{k-1}. 
  \end{equation}
  This settles (upon examining the relevant rows of \cite[Table 3]{garnier:tel-03622954})
  \begin{itemize}[wide]
  \item adjoint $C_n$, with $r:=n$ and $k:=\left\lfloor\frac{n+1}{2}\right\rfloor$;

  \item intermediate $D_{2n+1}$, with $r:=2n+1$ and $k:=2$ (in which case \Cref{eq:fixo.lk.fix.o} is again a disk, for projective 1-space is a circle);

  \item similarly, the intermediate $D_{2n}$ cases, with $r:=2n$ and $k:=2$ in one case and $k:=n$ in the two others. 
  \end{itemize}
  This leaves the adjoint types $D$ to handle (i.e. the top nodes in the 3-level diagrams \Cref{eq:le:array.along.lattice:d}).

  \begin{enumerate}[(I),wide]

  \item \textbf{: Adjoint $D_{2n+1\ge 5}$.} The fundamental group here is cyclic of order 4, generated by an element $\omega$ cyclic through 4 vertices of the simplex $\Delta^{2n+1}\cong \bD^{2n+1}\cong \widetilde{\bG}/\Ad$ and transposing $n-1$ other pairs of vertices.

    The square $\omega^2$ acts as a product of two disjoint transpositions on the vertices of a simplex $\Delta^3\subset \Delta^{2n+1}$. Quotienting first by $\braket{\omega^2}\cong \bZ/2$ produces another ball
    \begin{equation}\label{eq:dodd.interm.quot}
      \left(\Delta^3/\braket{\omega^2}\right)
      *
      \Delta^{2n-3}
      \cong
      \bD^3*\bD^{2n-3}
      \cong
      \bD^{2n+1}
    \end{equation}
    (as in the preceding discussion on intermediate types). In terms of the decomposition \Cref{eq:dodd.interm.quot}, $\omega$ acts on the two join factors $\bD^3$ and $\bD^{2n-3}$:
    \begin{itemize}
    \item on the latter by transposing its vertices in $n-1$ pairs (having identified $\bD^{2n-3}\cong \Delta^{2n-3}$);

    \item and on the former antipodally.
    \end{itemize}
    The resulting action of $\bZ/2\cong \braket{\omega}/\braket{\omega^2}$ on \Cref{eq:dodd.interm.quot} can thus be regrouped as
    \begin{equation}\label{eq:dodd.quotact}
      \left(\text{trivial on }\bD^{n-1}\right)*\left(\text{antipodal on }\bS^{n+1}\right),
    \end{equation}
    with the quotient claimed in the top left-hand node of \Cref{eq:le:array.along.lattice:d}.    
    
  \item \textbf{: Adjoint $D_{2n\ge 4}$.} This is the case discussed in part \Cref{item:th:conj.clss.fib:adjd} of the proof of \Cref{th:conj.clss.fib}, with
    \begin{equation*}
      (\bZ/2)^2
      \cong
      \pi_1(\bG)
      =
      \left\{1,\ \omega_{1,2,3}\right\}
      \quad
      \text{as in \Cref{eq:d2n.omegas}}. 
    \end{equation*}
    The preceding analysis goes through with appropriate modifications. First, the quotient by $\braket{\omega_1}\cong \bZ/2$ will be the disk
    \begin{equation}\label{eq:dev.interm.quot}
      \left(\Delta^3/\braket{\omega_1}\right)
      *
      \Delta^{2n-4}
      \cong
      \bD^3*\bD^{2n-4}
      \cong
      \bD^{2n},
    \end{equation}
    with the quotient $\pi_1/\braket{\omega_1}$ acting further on the two join factors
    \begin{itemize}[wide]
    \item by transposing the vertices of $\bD^{2n-4}\cong \Delta^{2n-4}$ in $n-2$ pairs;

    \item and operating on $\bD^3\cong \bD^1*\bS^1$ as
      \begin{equation*}
        \left(\text{identity on }\bD^1\right)*\left(\text{antipodal map on }\bS^1\right).
      \end{equation*}
    \end{itemize}
    Regrouping again (by analogy to \Cref{eq:dodd.quotact}), the action of the generating image of $\omega_2$ in $\pi_1/\braket{\omega_1}\cong \bZ/2$ on the quotient \Cref{eq:dev.interm.quot} is
    \begin{equation*}
      \left(\text{trivial on }\bD^{n}\right)*\left(\text{antipodal on }\bS^{n-1}\right).
    \end{equation*}
    Once more, the quotient is as in the right-hand top node of \Cref{eq:le:array.along.lattice:d}.
  \end{enumerate}  
\end{proof}

% % OLD: MOVED TO \Cref{le:join.lens}
% % 
% % \begin{remark}\label{re:not.all.balls}
% %   It is certainly not the case, in the context of \Cref{th:conj.clss.fib}, that the fibers \Cref{eq:ddidsi} are always homeomorphic to balls: step \Cref{item:th:conj.clss.fib:unif.cycl} of the above proof, for instance, gives enough information to determine $\bT/W$ up to homeomorphism in adjoint types $E$.
% %   \begin{equation*}
% %     E_7
% %     \ :\
% %     \bT/W\cong \bD^4*\bR\bP^2
% %     \quad\text{and}\quad
% %     E_6
% %     \ :\ 
% %     \bT/W\cong \bD^2*\bL^3,
% %   \end{equation*}
% %   with $\bL^3$ denoting one of the 3-dimensional \emph{lens spaces} $L_3$ of \cite[Example 2.43]{hatch_at}.
% % \end{remark}
% %

%\newpage

As a topical aside (relating to some of the specifics of how Weyl groups act on tori of compact simple Lie groups), recall the crucial observation in the proof of \Cref{pr:prod.protor.ss} (part \Cref{item:pr:prod.protor.ss:sc}) that for simply-connected compact simple $\bG$ the map
\begin{equation*}
  \bT
  \quad
  \overset{\text{maximal torus}}{\le}
  \quad
  \bG
  \xrightarrowdbl{\quad}
  \bG/\Ad
  \quad\overset{\text{\cite[Proposition IV.2.6]{btd_lie_1995}}}{\cong}\quad
  \bT/W
\end{equation*}
splits continuously. This is certainly \emph{not} always the case: never, for instance, for non-simply-connected type-$A$ groups:

\begin{proposition}\label{pr:afy.fundom}
  Let $n\in \bZ_{\ge 2}$, $\bG$ a non-simply-connected compact simple type-$A_n$ Lie group, and $\bT\subset \bG$ a maximal torus.

  The surjection
  \begin{equation}\label{eq:ft2tw}
    \begin{tikzpicture}[>=stealth,auto,baseline=(current  bounding  box.center)]
      \path[anchor=base] 
      (0,0) node (l) {$\ft:=Lie(\bT)$}
      +(2,.5) node (u) {$\bT$}
      +(4,0) node (r) {$\bT/W,$}
      +(6.5,0) node () {$W:=\text{Weyl group}$}
      ;
      \draw[->>] (l) to[bend left=6] node[pos=.5,auto] {$\scriptstyle \exp$} (u);
      \draw[->>] (u) to[bend left=6] node[pos=.5,auto] {$\scriptstyle $} (r);
      \draw[->>] (l) to[bend right=6] node[pos=.5,auto,swap] {$\scriptstyle \pi$} (r);
    \end{tikzpicture}
  \end{equation}
  does not split continuously.
\end{proposition}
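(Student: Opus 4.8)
The plan is to trade the maximal torus for its Lie algebra. Writing $\Lambda:=\ker\big(\exp\colon\ft\to\bT\big)$ for the (full-rank) integral lattice, the composite $\pi$ of \Cref{eq:ft2tw} is precisely the quotient map $\ft\to\ft/(W\ltimes\Lambda)$ for the evident action of the semidirect product, with $\Lambda$ acting by translations and $W$ linearly (indeed $\bT=\ft/\Lambda$ and the $W$-action on $\bT$ is induced from the linear $W$-action on $\ft$). Since $\bG$ is of type $A_n$ and \emph{not} simply connected, $\Lambda$ strictly contains the coroot lattice $Q^{\vee}=\ker\exp$ of the simply-connected cover $\SU(n+1)$; hence $W\ltimes\Lambda$ sits strictly between the affine Weyl group $W_{\mathrm{aff}}=W\ltimes Q^{\vee}$ and the extended affine Weyl group $W\ltimes P^{\vee}$ ($P^{\vee}$ the coweight lattice), with $(W\ltimes\Lambda)/W_{\mathrm{aff}}\cong\Lambda/Q^{\vee}\cong\pi_1(\bG)$ a cyclic group $\bZ/d$, $2\le d\mid n+1$. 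The operative point is that $W\ltimes\Lambda$, unlike $W_{\mathrm{aff}}$, is \emph{not} generated by reflections: its extra cosets are realized by the rotations of the extended Dynkin diagram $\widetilde A_n$ fixing the barycenter $b$ of the fundamental alcove $\Delta^n\subset\ft$.

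Fix a prime $p\mid d$. The rotation of $\widetilde A_n$ by $(n+1)/p$ nodes is implemented by an affine-linear map $\tau$ of $\ft$ of order $p$ fixing $b$; one checks (using $p\mid d$) that $\tau\in W\ltimes\Lambda$, that $\tau$ permutes the $n+1$ vertices of $\Delta^n$ in $(n+1)/p$ orbits of length $p$, and consequently that $V:=\mathrm{Fix}(\tau)\subseteq\ft$ is an affine subspace of dimension $(n+1)/p-1$ meeting the open alcove (it contains $b$). I would then choose $x\in V$ generic near $b$: inside the open alcove and off the finitely many proper subspaces $\mathrm{Fix}(\tau')\subsetneq V$, where $\tau'$ ranges over affine realizations of generators of the cyclic subgroups of $\bZ/d$ that strictly contain the class of $\tau$. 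For such $x$ the full $(W\ltimes\Lambda)$-stabilizer is exactly $\langle\tau\rangle\cong\bZ/p$: it meets $W_{\mathrm{aff}}$ trivially because the closed alcove is a strict fundamental domain for $W_{\mathrm{aff}}$ and $x$ lies in its interior, so it injects into $\Lambda/Q^{\vee}\cong\bZ/d$, and genericity forces the image to be $\langle\tau\rangle$. On the orthogonal complement $N:=V^{\perp}$ (for a $W$-invariant inner product on $\ft$, e.g. from the Killing form) the group $\bZ/p$ acts linearly with no nonzero fixed vector, so — $p$ being prime, so that every non-identity element generates $\bZ/p$ and hence acts without fixed vectors — it acts \emph{freely} on the unit sphere $\bS(N)$. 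The essential numerical input is $\dim N=(n+1)-\big((n+1)/p-1\big)=(n+1)\tfrac{p-1}{p}\ge 2$, which uses $n\ge 2$; for $n=1$ (the case $\mathrm{SO}(3)$, excluded here) one gets $\dim N=1$ instead, and there a section really does exist.

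The endgame is a slice argument. Suppose $s\colon\ft/(W\ltimes\Lambda)\to\ft$ is a continuous section of $\pi$; after replacing $s$ by $g^{-1}\circ s$ for a suitable $g\in W\ltimes\Lambda$ we may assume $s(\bar x)=x$, where $\bar x$ is the image of $x$. Since $W\ltimes\Lambda$ acts properly, there is a $\langle\tau\rangle$-invariant open $U\ni x$ with $gU\cap U\ne\emptyset\Rightarrow g\in\langle\tau\rangle$, so $U/\langle\tau\rangle$ embeds as an open neighborhood of $\bar x$; shrinking $U$ (near $x$ the $\langle\tau\rangle$-action is linear, so $U\cong B_V\times B_N$ with $\bZ/p$ acting only on the $B_N$ factor) and using continuity of $s$, we may assume $s\big(B_V\times(B_N/(\bZ/p))\big)\subseteq U$. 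Freezing the $B_V$-coordinate and rescaling radially (the action is isometric, so $\pi$ preserves norms), $s$ restricts to a continuous section of the quotient map $\bS(N)\to\bS(N)/(\bZ/p)$ of the free $\bZ/p$-action. But a section of a covering map is an open embedding with closed image, so $\bS(N)$ would break up into $p\ge 2$ nonempty clopen pieces, contradicting its connectedness — which is exactly where $\dim N\ge 2$ enters. Hence $\pi$ admits no continuous section.

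I expect the delicate step to be the middle one: producing a point whose $(W\ltimes\Lambda)$-stabilizer is cyclic of prime order and whose normal space has dimension at least $2$. Everything preceding it is routine bookkeeping about the affine Weyl group of type $A$ (including the identification $\ker\exp=Q^{\vee}$ for the simply-connected form), and everything following it is the standard fact that a nontrivial finite group acting freely on a connected space admits no continuous section of its quotient map.
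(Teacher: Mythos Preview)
Your argument is correct and is genuinely different from the paper's. The paper leans on Garnier's fundamental polytope $F_{\bT}\subset\ft$ for the $W\ltimes\Lambda$-action: a continuous section, once normalized to hit a single interior point of $F_{\bT}$, must coincide with $(\pi|_{\overset{\circ}{F_{\bT}}})^{-1}$ on the open set $\pi(\overset{\circ}{F_{\bT}})$ and hence, by continuity, force $\pi|_{F_{\bT}}$ to be injective. That is then contradicted by exhibiting a facet $F'$ shared by $F$ and $\omega F$ for nontrivial $\omega\in\Omega_{\bT}\cong\pi_1(\bG)$ and checking (via the type-$A$ cycle structure) that the fixed-point set of $\omega$ has dimension $<n-1$, so $\omega$ cannot fix the facet pointwise. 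In short: the paper's obstruction lives on the \emph{boundary} of a fundamental polytope.

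Your obstruction lives in the \emph{interior}: you locate a point near the barycenter with stabilizer exactly a prime cyclic $\bZ/p$, and then use the slice to reduce to the impossibility of sectioning a free $\bZ/p$-quotient of a connected sphere. This avoids citing the fundamental-domain machinery and is entirely self-contained from standard affine-Weyl-group bookkeeping plus covering-space theory; it also makes transparent where $n\ge 2$ enters (the normal dimension $(n+1)(p-1)/p\ge 2$). The paper's proof is shorter once Garnier's polytope results are in hand and gives a more global, polytopal picture; yours is more portable and would adapt with minimal change to other types whose alcove automorphisms have small fixed sets. The genericity step you flag as ``delicate'' is fine: the subgroup lattice of $\bZ/d$ is a chain above $\langle\tau\rangle$, each larger subgroup has strictly smaller fixed locus inside $V$, and there are finitely many such, so a Baire-type avoidance in $V$ suffices.
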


For a compact, connected, simple Lie group $\bG$ and a maximal torus $\bT\subset \bG$ we denote by
\begin{equation}\label{eq:ft}
  \ft:=Lie(\bT)\supset F_{\bT}
  \xrightarrow[\quad\cong\quad]{\quad\exp\quad}
  \exp F_{\bT}
  \subset
  \bT
\end{equation}
any one of the spaces $F_Y$ of \cite[Theorem 2.3.5]{2409.16483v1}. The image $\exp F_{\bT}$ is a \emph{fundamental domain} for the Weyl-group action in the sense of \cite[\S 1.2]{2409.16483v1}:
\begin{enumerate}[(a),wide]
\item closed and connected;
\item\label{item:int.dist.orb} with no distinct elements in its interior belonging to the same orbit;
\item and whose $W$-translates cover $\bT$. 
\end{enumerate}
This is very much in line with the way the phrase is used in many other sources (\cite[\S III.1, p.100]{kob_ell_2e_1993}, say; see also \cite[\S 3]{2308.11997v1}), but note that the fundamental domains of \cite[\S 1.12, p.22]{hmph_cox} entail more: there, no distinct points lie on the same orbit period (so condition \Cref{item:int.dist.orb} above is strengthened). The discrepancy between the two competing notions is in a sense at the heart of the phenomenon recorded in \Cref{pr:afy.fundom}.

\pf{pr:afy.fundom}
\begin{pr:afy.fundom}
  In the spirit of \Cref{eq:ft}, we substitute `$\bT$' for `$Y$' as a subscript for the group $\Omega_Y$ of \cite[Proposition 2.3.4]{2409.16483v1}: it is here denoted by $\Omega_{\bT}\cong \pi_1(\bG)$. It will be enough to argue that $F_{\bT}\subset \ft$ contains distinct points in the same Weyl orbit: a continuous right inverse $\iota$ of \Cref{eq:ft2tw} can be assumed to map any one point in
  \begin{equation*}
    \pi\left(\text{interior }\overset{\circ}{F_{\bT}}\right)
    \subset \bT/W
  \end{equation*}
  back into $\overset{\circ}{F_{\bT}}$, whence
  \begin{equation*}
    \iota|_{\pi\left(\overset{\circ}{F_{\bT}}\right)}
    =
    \left(\pi|_{\overset{\circ}{F_{\bT}}}\right)^{-1}
  \end{equation*}
  and $\pi$ would have to be injective on $F_{\bT}$.   
  
  Let $1\ne \omega\in \Omega_{\bT}$. By \cite[Proposition 2.3.3]{2409.16483v1}, the polytope $F\subseteq F_{\bT}$ constructed in \cite[Proposition 2.2.3]{2409.16483v1} (and denoted there by $F_{P^{\vee}}$) intersects its iterate $\omega F$ along a \emph{facet} $F'$ (i.e. maximal proper face \cite[\S 2.6]{grnbm_polyt}). We thus have 
  \begin{equation*}
    \omega F_{\bT}\cap F_{\bT}
    \supseteq
    \omega F\cap F
    =
    F',
  \end{equation*}
  and it remains to argue that $\omega$ cannot fix $F'$ pointwise. This follows from the type-$A$ row of \cite[Table 3]{garnier:tel-03622954}: $\omega$ is an affine operator permuting the $n+1$ vertices of the $n$-simplex
  \begin{equation*}
    \Delta^n\cong \widetilde{\bT}/W
    ,\quad
    \left(\text{$\widetilde{\bT}\subset \widetilde{\bG}$ a maximal torus of the universal cover $\widetilde{\bG}\xrightarrowdbl{}\bG$}\right).
  \end{equation*}
  $\omega$ being non-trivial, it operates on the $n+1$ vertices of $\Delta^n$ as a product
  \begin{equation*}
    \omega=\sigma_1\cdot \sigma_2\cdots \sigma_k
    ,\quad
    1\le k<n
    ,\quad
    \sigma_i\text{ an $\frac{n+1}{k}$-cycle}.
  \end{equation*}
  Each $\sigma_i$ cycles through the $\frac{n+1}{k}$ vertices of a $\frac{n+1-k}{k}$-simplex $\Delta_i$, and the affine automorphism of $\aspn\Delta^n$ induced by $\omega$ is an order-$\frac{n+1}{k}$ rotation around the affine $(k-1)$-plane
  \begin{equation*}
    \aspn \left\{\text{barycenter of }\Delta_i\ :\ 1\le i\le k\right\}.
  \end{equation*}
  In particular, the fixed-point set of $\omega$ is $(k-1)$-dimensional; certainly, then, $\omega$ cannot fix an $F$-facet (of dimension $n-1>k-1$) pointwise.  
\end{pr:afy.fundom}

\addcontentsline{toc}{section}{References}
%\bibliography{bib}{}
%\bibliographystyle{plain}

\def\polhk#1{\setbox0=\hbox{#1}{\ooalign{\hidewidth
  \lower1.5ex\hbox{`}\hidewidth\crcr\unhbox0}}}

\Addresses

\end{document}